\documentclass[12pt]{article}

\usepackage{amsfonts,mathrsfs}
\usepackage{amssymb,amsmath,amsbsy,amsthm}
\usepackage{dsfont}
\usepackage{blkarray}
\usepackage{tikz}
\usepackage{tkz-euclide}
\usepackage{tikz-cd}
\usetikzlibrary{patterns}
\usepackage{ae}
\usepackage{bm}
\usepackage{graphicx}
\usepackage{float}
\usepackage{cite}
\usepackage{indentfirst}
\usepackage{lineno}
\usepackage{titlesec}
\usepackage{enumitem}
\usepackage{color}
\usepackage{aliascnt}
\usepackage{varioref}
\usepackage{hyperref}
\hypersetup{colorlinks=true,linkcolor=cyan,anchorcolor=cyan,citecolor=red,CJKbookmarks=True,
	,urlcolor=cyan}
\pdfstringdefDisableCommands{%
	\renewcommand*{\bm}[1]{#1}%
}
\usepackage{cleveref}

\numberwithin{equation}{section}

\def\int{\mbox{\rm int}}

\def\min{\mbox{\rm min}}
\def\max{\mbox{\rm max}}

\def\And{\mbox{\rm ~and~}}

\def\For{\mbox{\rm ~for~}}

\def\black{\color{black}}

\def\({\mbox{\rm (}}\def\){\mbox{\rm )}}

\makeatletter

\newcommand{\Rmnum}[1]{\expandafter\@slowromancap\romannumeral #1@}
\makeatother
\newtheorem{theorem}{Theorem}[section]
\newaliascnt{lemma}{theorem}
\newtheorem{lemma}[lemma]{Lemma}
\aliascntresetthe{lemma}

\newaliascnt{proposition}{theorem}
\newtheorem{proposition}[proposition]{Proposition}
\aliascntresetthe{proposition}

\newaliascnt{fact}{theorem}

\aliascntresetthe{fact}

\newaliascnt{definition}{theorem}
\newtheorem{definition}[definition]{Definition}
\aliascntresetthe{definition}

\newaliascnt{conjecture}{theorem}

\aliascntresetthe{conjecture}

\newaliascnt{corollary}{theorem}
\newtheorem{corollary}[corollary]{Corollary}
\aliascntresetthe{corollary}

\newaliascnt{claim}{theorem}

\aliascntresetthe{claim}

\newaliascnt{problem}{theorem}

\aliascntresetthe{problem}

\newaliascnt{question}{theorem}

\aliascntresetthe{question}

\newaliascnt{remark}{theorem}

\aliascntresetthe{remark}

\newaliascnt{example}{theorem}
\newtheorem{example}[example]{Example}
\aliascntresetthe{example}

\newaliascnt{notation}{theorem}

\aliascntresetthe{notation}

\linespread{1.0}

\setlength{\parindent}{2em}
\setlength{\textheight}{240mm}

\addtolength{\hoffset}{-15mm}
\addtolength{\voffset}{-20mm}
\addtolength{\textwidth}{30mm}
\begin{document}
	
	\begin{center}
		{\Large\bf
			Level of Regions for Deformed Braid Arrangements
		}\\ [7pt]
	\end{center}
	\vskip 3mm
	\begin{center}
		Yanru Chen$^1$, Houshan Fu$^{2,*}$,  Suijie Wang$^3$ and Jinxing Yang$^{4}$\\[8pt]
		
		$^{1,3,4}$School of Mathematics\\
		Hunan University\\
		Changsha 410082, Hunan, P. R. China\\[12pt]
		
		$^2$School of Mathematics and Information Science\\
		Guangzhou University\\
		Guangzhou 510006, Guangdong, P. R. China\\[15pt]
		
		$^*$Correspondence to be sent to: fuhoushan@gzhu.edu.cn \\
		Emails: $^1$yanruchen@hnu.edu.cn, $^3$wangsuijie@hnu.edu.cn, $^{4}$jxyangmath@hnu.edu.cn\\[15pt]
	\end{center}
	\vskip 3mm
	
	\begin{abstract}
		This paper primarily investigates a specific type of deformation of the braid arrangement $\mathcal{B}_n$ in $\mathbb{R}^n$, denoted by $\mathcal{B}_n^A$  and defined in  \eqref{Deformation-Arrangement}. Let $r_l(\mathcal{B}_n^A)$ be the number of regions of level $l$ in $\mathcal{B}_n^A$ with the corresponding exponential generating function $R_l(A;x)$. Using the weighted digraph model introduced by Hetyei \cite{Hetyei2024}, we establish a bijection between regions of level $l$ in $\mathcal{B}_n^A$ and valid $m$-acyclic weighted digraphs on the vertex set $[n]$ with exactly $l$ strong components. Based on this bijection, we obtain a property analogous to a polynomial sequence of binomial type, that is, $R_l(A;x)$ satisfies the relation
		\[
		R_l(A;x)=\big(R_1(A;x)\big)^l=R_k(A;x)R_{l-k}(A;x).
		\]
		Furthermore, the values $r_l(\mathcal{B}_n^A)$ yield a combinatorial interpretation for the coefficients in the expansion of the characteristic polynomial $\chi_{\mathcal{B}_n^A}(t)$ in the basis elements $\binom{t}{l}$, that is,		\[\chi_{\mathcal{B}_n^A}(t)=\sum_{l=0}^n(-1)^{n-l}r_l(\mathcal{B}_n^A)\binom{t}{l}.\]
If $n$, $a$ and $b$ are non-negative integers with $n\ge 2$ and $b-a\ge n-1$, for the deformation $\mathcal{B}_n^{[-a,b]}$ defined in \eqref{Integer-Arrangement}, its characteristic polynomial has a single real root $0$ of multiplicity one when $n$ is odd, and has one more real root $\frac{n(a+b+1)}{2}$ of multiplicity one when $n$ is even.
		\vskip 6pt
		\noindent{\bf Keywords:} hyperplane arrangement, deformation of braid arrangement, exponential sequence of arrangements, sequence of binomial type, characteristic polynomial\vspace{1ex}\\
		{\bf Mathematics Subject Classifications:} 05C35, 05A10, 05A15, 11B83
	\end{abstract}
	
	\section{Introduction}\label{Sec-1}

 One of the most important hyperplane arrangements is the {\em braid arrangement} $\mathcal{B}_n$, which consists of the following hyperplanes in the $n$-dimensional Euclidean space $\mathbb{R}^n$,
	\[
	\mathcal{B}_n:\quad x_i-x_j=0,\quad 1\le i<j\le n.
	\]
	A {\em deformation} of $\mathcal{B}_n$ is obtained by replacing each hyperplane $x_i-x_j=0$ with hyperplanes:
	\begin{equation}\label{Deformation}
		x_i-x_j=a_{ij}^{(1)},a_{ij}^{(2)},\ldots,a_{ij}^{(n_{ij})}, \quad 1\le i<j\le n,
	\end{equation}
	where each $a_{ij}^{(k)}$ is a real number. Various special deformations of $\mathcal{B}_n$ have been extensively studied in \cite{Athanasiadis1998,P-S2000}. In this paper, we mainly study a specific type of deformation $\mathcal{B}_n^A$ of the braid arrangement $\mathcal{B}_n$ , which is defined as follows, for any finite set $A=\{a_1,\ldots,a_m\}$ of real numbers,
\begin{equation}\label{Deformation-Arrangement}
\mathcal{B}_n^A:\quad x_i-x_j=a_1,\ldots,a_m, \quad 1\le i<j\le n.
\end{equation}
Given integers $a,b$ with $a\le b$, let $[a,b]=\{a,a+1,\ldots,b\}$. When $b$ is a positive integer, we simply denote $[b]=[1,b]$ and $[-b]=[-b,-1]$. Then the deformation $\mathcal{B}_n^{[a,b]}$ of $\mathcal{B}_n$ is
\begin{equation}\label{Integer-Arrangement}
	\mathcal{B}_n^{[a,b]}: \quad x_i-x_j=a,a+1,\ldots,b,\quad1\le i<j\le n.
\end{equation}
	Several especial cases are noteworthy: $\mathcal{B}_n^{[1]}$ is the {\em Linial arrangement} and $\mathcal{B}_n^{[-b+2,b]}$ is the {\em extended Linial arrangement} with $b\ge 1$; $\mathcal{B}_n^{[0,1]}$ is the {\em Shi arrangement} and $\mathcal{B}_n^{[-b+1,b]}$ is the {\em extended Shi arrangement} with $b\ge 1$; $\mathcal{B}_n^{[-1,1]}$ is the {\em Catalan arrangement} and $\mathcal{B}_n^{[-b,b]}$ is the {\em extended Catalan arrangement} with $b\ge 1$; a more general case $\mathcal{B}_n^{[-a+1,b-1]}$ is the {\em truncated affine arrangement} studied in \cite{P-S2000} for non-negative integers $a+b\ge 2$.

Counting regions of a hyperplane arrangement is commonly done by computing its characteristic polynomial and applying it to Zaslavsky's formula \cite{Zaslavsky1975}. This approach triggers many significant combinatorial labelings of regions, such as the Pak-Stanley labeling \cite{D-G2021,Stanley1996,Stanley1998} for the regions of  extended Shi arrangements and extended Catalan arrangements, the Athanasiadis-Linusson labeling \cite{Athanasiadis1999} for the regions of the extended Shi arrangement, and the Hopkins-Perkinson labeling \cite{H-P2016} for counting regions of bigraphical arrangements. For more related works, please refer to \cite{F-W2021,Gessel2019,H-P2012,L-R-W2014,Tewari2019}.

This paper focuses on enumerative problems concerning regions of level $l$ in these deformations $\mathcal{B}_n^A$. A {\em region} of a hyperplane arrangement $\mathcal{A}$ in $\mathbb{R}^n$ is a connected component of its complement  $\mathbb{R}^n-\bigcup_{H \in \mathcal{A}} H$.   Ehrenborg \cite{Ehrenborg2019} introduced the {\em level} of a subset $X$ in $\mathbb{R}^n$, which is the smallest non-negative integer $l$ such that there exists a subspace $W$ of dimension $l$ and a positive real number $r$ satisfying
	\[
	X \subseteq\big\{\bm x \in \mathbb{R}^n : \min_{\bm w\in W} \|\bm x - \bm w\| \le r \big\},
	\]
denoted by $l(X)$. Particularly, when $X$ is a subspace or cone in $\mathbb{R}^n$, obviously $l(X)=\dim(X)$.
Ehrenborg \cite{Ehrenborg2019} provided an explicit expression on the number of faces of level $l$ and codimension $k$ for extended Shi arrangements. Armstrong and Rhoades \cite{Armstrong2012} independently defined the level of a convex set  relative to its recession cone. For any convex set $C\subseteq\mathbb{R}^n$, it determines a {\em recession cone} ${\rm Rec}(C):=\{\bm x\in\mathbb{R}^n: \bm x+C\subseteq C\}$. They defined the dimension of ${\rm Rec}(C)$ as the number of {\em degrees of freedom} of the convex set $C$, and revealed that these ``degrees of freedom'' are a fundamental aspect of the ``combinatorial symmetry'' between the Shi and Ish arrangements. It is clear that the level $l(C)$ equals the number of degrees of freedom of $C$. Zaslavsky \cite{Zaslavsky2003} defined the {ideal dimension} $\dim_{\infty}(X)$ of a real affine set $X$ as the dimension of the intersection of its projective topological closure with the infinite hyperplane. Furthermore,  Zaslavsky demonstrated that for any subset $X\subseteq\mathbb{R}^n$,  the relation $l(X) \geq \dim_{\infty}(X) + 1$ holds, with equality when $X$ is a polyhedron.  Based on their work, Leven, Rhoades, and Wilson \cite{L-R-W2014} constructed bijections that preserve levels between regions of deleted Shi and Ish arrangements. Most recently, Chen,Wang,Yang and Zhao \cite[Theorem 6.1, 6.2]{CWYZ2024} addressed several enumerative problems concerning the number of regions of level $l$ in Catalan-type and semiorder-type arrangements by constructing a labeled Dyck path model for their regions, and presented a formula for counting regions of level $l$ in extended Catalan arrangements. Hetyei \cite{Hetyei2024} developed a more general method for labeling regions of any deformation of the braid arrangement by weighted digraphs. In this work,  Hetyei showed that regions of any deformation of the braid arrangement can be bijectively labeled by a set of valid $m$-acyclic weighted digraphs, where relatively bounded regions correspond precisely to strongly connected valid $m$-acyclic weighted digraphs.

The present work is motivated by Chen,Wang,Yang and Zhao's paper \cite{CWYZ2024} and Hetyei's labeling \cite{Hetyei2024}. Note that both Catalan-type and semiorder-type arrangements are symmetric deformations of braid arrangements. One purpose of this paper is to extend their results to nonsymmetric deformations $\mathcal{B}_n^{A}$ of the braid arrangement, utilizing the weighted digraph model of regions introduced by Hetyei \cite{Hetyei2024}. Another focus is the investigation of real roots of the characteristic polynomial $\chi_{\mathcal{B}_n^{[-a,b]}}(t)$ of the hyperplane arrangement $\mathcal{B}_n^{[-a,b]}$ for any non-negative integers $a\le b$. In \autoref{Main-Lel}, we obtain that the level of any region in the hyperplane arrangement $\mathcal{B}_n^A$ equals the number of strong components of the associated valid $m$-acyclic weighted digraph. This enables us to refine Hetyei's work for $\mathcal{B}_n^A$. More specifically, we prove in \autoref{Bij-3} that the regions of level $l$ in $\mathcal{B}_n^A$ are in bijection with the valid $m$-acyclic weighted digraphs that have exactly $l$ strong components. Furthermore, by applying the weighted digraph model for counting regions, we demonstrate in \autoref{Main1} that the sequence $\big(r_l(\mathcal{B}_n^A)\big)_{n\ge 0}$ possesses a property analogous to that of a polynomial sequence of binomial type, which generalizes the results \cite[Theorem 1.2, 1.3, 1.4]{CWYZ2024}. Polynomial sequences of binomial type arising from various fields, including graph theory and partially ordered sets, were systematically summarized by Rota \cite{MR-1970,RKO-1973} as fundamental theory of combinatorics.

	
	Denote by $r(\mathcal{A})$ the total number of regions of a hyperplane arrangement $\mathcal{A}$ in $\mathbb{R}^n$ and $r_l(\mathcal{A})$ the number of regions of level $l$, with the convention that $r_0(\mathcal{A})=1$ when $n=0$. Obviously, $r_l(\mathcal{A})=0$ if  $l>n$.  Associated with $r_l(\mathcal{A})$, we define an exponential generating function as
	\begin{equation}\label{Def1}
		R_l(A;x):=\sum_{n\ge 0}r_l(\mathcal{B}_n^A)\frac{x^n}{n!}=\sum_{n\ge l}r_l(\mathcal{B}_n^A)\frac{x^n}{n!}.
	\end{equation}
	
	\begin{theorem}\label{Main1}
		Let $A=\{a_1,\ldots,a_m\}$ consist of distinct real numbers. For any non-negative integers $n$ and $l$, we have
		\[
		R_l(A;x)=\big(R_1(A;x)\big)^l.
		\]
		Equivalently, the relationship is given explicitly as follows:
		\begin{equation*}
			r_l\big(\mathcal{B}_n^A\big)=\sum_{\substack{n_1+\cdots+n_l=n,\\n_1,\ldots,n_l\ge 1}}\binom{n}{n_1,\ldots ,n_l}\prod_{i=1}^lr_1\big(\mathcal{B}_{n_i}^A\big).
		\end{equation*}
		Moreover, for any non-negative integer $k\le l$, we have
		\[
		R_l(A;x)=R_k(A;x)R_{l-k}(A;x),
		\]
		that is,
		\[
		r_l(\mathcal{B}_n^A)=\sum_{i=0}^n\binom{n}{i}r_{k}(\mathcal{B}_i^A)r_{l-k}(\mathcal{B}_{n-i}^A).
		\]
	\end{theorem}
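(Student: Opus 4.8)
The plan is to deduce everything from a product decomposition of the weighted digraphs supplied by \autoref{Bij-3}. Write $\mathcal{D}_n^{(l)}$ for the set of valid $m$-acyclic weighted digraphs on the vertex set $[n]$ having exactly $l$ strong components, so that \autoref{Bij-3} gives $r_l(\mathcal{B}_n^A)=|\mathcal{D}_n^{(l)}|$; in particular $r_1(\mathcal{B}_n^A)=|\mathcal{D}_n^{(1)}|$ counts the \emph{strongly connected} such digraphs on $[n]$, and $r_1(\mathcal{B}_0^A)=0$. Since $R_l(A;x)=R_1(A;x)^l$ instantly yields $R_l=R_1^l=R_1^kR_1^{l-k}=R_kR_{l-k}$ for $k\le l$, and since the two ``equivalently'' displays are merely the coefficients of $x^n/n!$ in the two product identities, extracted by the usual multiplication rule for exponential generating functions, it suffices to prove
\[
r_l\big(\mathcal{B}_n^A\big)=\sum_{\substack{n_1+\cdots+n_l=n,\\ n_1,\ldots,n_l\ge 1}}\binom{n}{n_1,\ldots ,n_l}\prod_{i=1}^lr_1\big(\mathcal{B}_{n_i}^A\big).
\]
For this I would exhibit a bijection between $\mathcal{D}_n^{(l)}$ and the data $\big((B_1,\ldots,B_l);D_1,\ldots,D_l\big)$ where $(B_1,\ldots,B_l)$ is an ordered partition of $[n]$ into nonempty blocks and each $D_i$ is a strongly connected valid $m$-acyclic weighted digraph on $B_i$; the number of choices of $D_i$ depends only on $|B_i|$ (the order-preserving relabeling $B_i\to[|B_i|]$ preserves all the relevant conditions) and equals $r_1(\mathcal{B}_{|B_i|}^A)$ by the $l=1$ case of \autoref{Bij-3}. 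As the multinomial coefficient $\binom{n}{n_1,\ldots,n_l}$ counts the ordered set partitions of $[n]$ with prescribed block sizes, the bijection yields exactly the displayed formula, and passing to exponential generating functions turns it into $R_l(A;x)=R_1(A;x)^l$; here the constraint $n_i\ge 1$ may be relaxed to $n_i\ge 0$ at no cost since $r_1(\mathcal{B}_0^A)=0$.

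To construct the bijection, let $D\in\mathcal{D}_n^{(l)}$ with strong components $C_1,\ldots,C_l$. The underlying digraph of a valid $m$-acyclic weighted digraph is a tournament (exactly one arc on each pair of vertices), hence its condensation is an acyclic tournament, that is, a transitive tournament; this endows the strong components with a canonical linear order, say $C_1\prec\cdots\prec C_l$. I would first observe that each induced weighted subdigraph $D|_{C_i}$ is again valid and $m$-acyclic --- these properties are cut out by conditions on weighted walks and cycles, and so are inherited by induced subdigraphs --- and is strongly connected by definition of a strong component, so $D|_{C_i}$ is one of the admissible $D_i$'s. It then remains to see that the arcs of $D$ joining distinct components are forced by the order $\prec$: for $u\in C_i$, $v\in C_j$ with $i<j$, the pair $\{u,v\}$ is not ``mutually bounded'', so the corresponding arc carries an \emph{extreme} weight, and transitivity of the condensation together with Hetyei's validity conditions pin that weight down to the outermost slab compatible with $C_i\prec C_j$, uniformly in $u$ and $v$. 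Thus $D\mapsto\big((C_1,\ldots,C_l);D|_{C_1},\ldots,D|_{C_l}\big)$ is well defined; conversely, from an ordered partition $(B_1,\ldots,B_l)$ and strongly connected digraphs $D_i$ on the $B_i$ one reassembles the unique weighted digraph that restricts to $D_i$ on each $B_i$ and carries the forced extreme weights between blocks, and the two maps are mutually inverse once one checks that this reassembled digraph lies in $\mathcal{D}_n^{(l)}$ with strong components precisely $B_1\prec\cdots\prec B_l$.

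The main obstacle is that last check: that the reassembled digraph is valid and $m$-acyclic with the prescribed strong components. That the $B_i$ are exactly its strong components and that $m$-acyclicity holds is comparatively easy, since any weighted cycle meeting two blocks must traverse inter-block arcs whose extreme weights vary monotonically along $\prec$ and therefore cannot close up; the real work is verifying Hetyei's validity (a system of generalized triangle-type inequalities / slab-consistency relations) for the ``mixed'' triples --- two vertices in one block and one in another, and triples spread over three blocks. Handling these amounts to re-deriving, purely in the digraph language, the geometric fact that if one vertex of $C_j$ lies ``far above'' some vertex of $C_i$ then it lies far above \emph{every} vertex of $C_i$, using that all pairwise weights inside a single block correspond to the bounded range $(a_1,a_m)$. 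Once this bookkeeping is done, the stated identities $R_l(A;x)=R_1(A;x)^l=R_k(A;x)R_{l-k}(A;x)$ and their two coefficient forms follow formally.
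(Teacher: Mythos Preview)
Your approach is essentially the paper's: both establish the multinomial identity for $r_l(\mathcal{B}_n^A)$ by decomposing a valid $m$-acyclic weighted digraph with $l$ strong components into an ordered set partition together with a strongly connected digraph on each block, and then pass to exponential generating functions. The difference is that the paper does not construct this decomposition from scratch; it simply invokes \autoref{Bij-2} (Hetyei's structural theorem), which already asserts that every such digraph is uniquely obtained from an ordered partition $(B_1,\ldots,B_l)$ and strongly connected pieces, with the inter-block arcs forced to carry the extreme weights $a_m$ or $-a_1$. Your ``main obstacle'' --- checking validity and $m$-acyclicity of the reassembled digraph --- is precisely the content of \autoref{Bij-2}, so citing it eliminates all the bookkeeping you anticipate.

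One slip to fix: the underlying digraph of a valid weighted digraph is \emph{not} a tournament. Case (2) of \autoref{Valid-WD} puts arcs in both directions on the pair $\{i,j\}$ whenever $a_k<x_i-x_j<a_{k+1}$ for some $1\le k<m$. What is true is that every pair of vertices carries at least one arc, and that two vertices in distinct strong components cannot have arcs in both directions (else they would be strongly connected); hence the condensation is a complete acyclic digraph, i.e.\ a transitive tournament, and your linear order on the strong components is well defined. With this correction (or simply by invoking \autoref{Bij-2}), your argument goes through and matches the paper's.
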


	In 1996, Stanley \cite{Stanley1996} observed a fundamental yet important phenomenon that for an exponential sequence  $\mathfrak{A}=(\mathcal{A}_{1},\mathcal{A}_{2},\ldots)$ of arrangements, the characteristic polynomial of  $\mathcal{A}_n$ is actually determined by the number of regions, which is stated at the beginning of \autoref{Sec-3}. Recently, the authors in \cite{CWYZ2024} provided an explicit expression for the characteristic polynomial of the  Catalan-type and semiorder-type arrangement in terms of the number of regions of each level $l$. Notably, the sequence $\mathfrak{B}=(\mathcal{B}_1^A,\mathcal{B}_2^A,\ldots)$ is an exponential sequence of arrangements. Below we extend their work \cite[Theorem 1.5]{CWYZ2024} to a more general hyperplane arrangement $\mathcal{B}_n^A$.
	\begin{theorem}\label{Main2}
		Let $A=\{a_1,\ldots,a_m\}$ consist of distinct real numbers. For any non-negative integer $n$, the characteristic polynomial $\chi_{\mathcal{B}_n^A}(t)$  can be expressed in the following form
		\[
		\chi_{\mathcal{B}_n^A}(t)=\sum_{l=0}^n(-1)^{n-l}r_l(\mathcal{B}_n^A)\binom{t}{l}
		\]
		with the convention $\chi_{\mathcal{B}_0^A}(t)=1$.
	\end{theorem}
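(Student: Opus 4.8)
The plan is to derive \autoref{Main2} from Stanley's exponential-sequence formula (recalled at the start of \autoref{Sec-3}) together with the binomial-type relation $R_l(A;x)=\big(R_1(A;x)\big)^l$ established in \autoref{Main1}. Since $\mathfrak{B}=(\mathcal{B}_1^A,\mathcal{B}_2^A,\ldots)$ is an exponential sequence of arrangements, Stanley's formula gives
\[
\sum_{n\ge 0}\chi_{\mathcal{B}_n^A}(t)\,\frac{x^n}{n!}=G(x)^{\,t},\qquad
G(x):=\sum_{n\ge 0}\chi_{\mathcal{B}_n^A}(1)\,\frac{x^n}{n!},
\]
which I will treat as an identity of formal power series whose coefficients are polynomials in $t$ (for the right-hand side this uses $G(0)=\chi_{\mathcal{B}_0^A}(1)=1$, so that $G(x)^t=\exp\big(t\log G(x)\big)$ makes sense). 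It then suffices to identify $G(x)$ and to compare coefficients of $x^n/n!$ with the claimed expression.

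To pin down $G(x)$, I would specialize the identity at $t=-1$. By Zaslavsky's theorem $r(\mathcal{B}_n^A)=(-1)^n\chi_{\mathcal{B}_n^A}(-1)$, so
\[
G(x)^{-1}=\sum_{n\ge 0}\chi_{\mathcal{B}_n^A}(-1)\,\frac{x^n}{n!}=\sum_{n\ge 0}r(\mathcal{B}_n^A)\,\frac{(-x)^n}{n!}.
\]
Because every region of $\mathcal{B}_n^A$ has a well-defined level in $\{0,1,\ldots,n\}$, we have $r(\mathcal{B}_n^A)=\sum_{l\ge 0}r_l(\mathcal{B}_n^A)$, whence, using \autoref{Main1} and then summing a geometric series (legitimate because $r_1(\mathcal{B}_0^A)=0$ forces $R_1(A;x)$ to have zero constant term),
\[
G(x)^{-1}=\sum_{l\ge 0}R_l(A;-x)=\sum_{l\ge 0}\big(R_1(A;-x)\big)^l=\frac{1}{1-R_1(A;-x)}.
\]
Hence $G(x)=1-R_1(A;-x)$. (The same value of $G(x)$ can also be read off directly from the bounded-region form of Zaslavsky's theorem, once one observes that by \autoref{Main-Lel} and Hetyei's labeling the relatively bounded regions of $\mathcal{B}_n^A$ are precisely its regions of level $1$.)

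It remains to check that the proposed right-hand side of \autoref{Main2} has exponential generating function $G(x)^t$. Interchanging the order of summation and writing $(-1)^{n-l}=(-1)^n(-1)^l$,
\[
\sum_{n\ge 0}\Big(\sum_{l=0}^{n}(-1)^{n-l}r_l(\mathcal{B}_n^A)\binom{t}{l}\Big)\frac{x^n}{n!}
=\sum_{l\ge 0}\binom{t}{l}(-1)^l R_l(A;-x)
=\sum_{l\ge 0}\binom{t}{l}\big({-}R_1(A;-x)\big)^l,
\]
where the last step is \autoref{Main1}; by the binomial series this equals $\big(1-R_1(A;-x)\big)^t=G(x)^t$, which matches $\sum_{n\ge 0}\chi_{\mathcal{B}_n^A}(t)\frac{x^n}{n!}$. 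Extracting the coefficient of $x^n/n!$ then yields $\chi_{\mathcal{B}_n^A}(t)=\sum_{l=0}^{n}(-1)^{n-l}r_l(\mathcal{B}_n^A)\binom{t}{l}$, the case $n=0$ reducing to the convention $\chi_{\mathcal{B}_0^A}(t)=1$. The only delicate points are organizational rather than substantive: one must apply Stanley's formula as a polynomial identity in $t$ so that the specialization $t=-1$ is admissible, and one must keep track of the $n=0$ constant terms so that the geometric and binomial series are used correctly; with \autoref{Main1} in hand, no further combinatorics is needed.
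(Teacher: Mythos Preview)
Your proof is correct and follows essentially the same route as the paper: apply Stanley's exponential-sequence formula to $\mathfrak{B}=(\mathcal{B}_1^A,\mathcal{B}_2^A,\ldots)$, identify the base of the $t$-th power as $1-R_1(A;-x)$, expand via the binomial series, and invoke \autoref{Main1} to replace $R_1(A;-x)^l$ by $R_l(A;-x)$. The only cosmetic difference is in how the base is identified: the paper specializes Stanley's formula at $t=1$ and uses the bounded-region form of Zaslavsky's theorem, $\chi_{\mathcal{B}_n^A}(1)=(-1)^{n-1}r_1(\mathcal{B}_n^A)$, to get $G(x)=1-R_1(A;-x)$ in one stroke, whereas you specialize at $t=-1$, use the region-count form of Zaslavsky, and then sum a geometric series via \autoref{Main1}; you yourself note the paper's shortcut as an alternative, so the two arguments are interchangeable.
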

	
	It is worth remarking that Postnikov and  Stanley \cite{P-S2000} investigated roots of the characteristic polynomials of truncated affine arrangements and demonstrated that these arrangements satisfy the ``Riemann hypothesis". Motivated by their work, we further explore real roots of the characteristic polynomial $\chi_{\mathcal{B}_n^{[-a,b]}}(t)$ under the conditions that the non-negative integers $n\ge 2$, $a$ and $b$ satisfy $b-a\ge n-1$ in \autoref{Sec-5}.
	
	The paper is organized as follows. \autoref{Sec-2} introduces basic results related to the weighted digraph model and proves \autoref{Main1}. In \autoref{Sec-3}, we build on \autoref{Main1} to further verify \autoref{Main2}. \autoref{Sec-4} provides a formula for counting regions of level $l$ in $\mathcal{B}_n^{[-a,b]}$. Finally, we discuss real roots of the characteristic polynomial $\chi_{\mathcal{B}_n^{[-a,b]}}(t)$ in \autoref{Sec-5}.

\section{Proof of \autoref{Main1}}\label{Sec-2}
	
\subsection{Labeling regions of deformations of the braid arrangement}
	Let us first review some necessary definitions on digraphs. Let $D=\big(V(D),A(D)\big)$ be a finite digraph with the vertex set $V(D)$ and directed edge set $A(D)$. In the digraph $D$, two vertices $v$ and $u$ are said to be {\em strongly connected} if there exists a directed $(v,u)$-walk from $v$ to $u$ and a directed $(u,v)$-walk from $u$ to $v$. Alternatively,  two vertices $v$ and $u$ is strongly connected in $D$ if and only if there exists a directed cycle of $D$ containing $v$ and $u$. The digraph $D$ is called {\em strongly connected} if any two vertices $v$ and $u$ are strongly connected. Every maximal strongly connected subdigraph of $D$ is known as a {\em strong component} of $D$.
	
	Most recently, Hetyei \cite{Hetyei2024} constructed a weighted digraph associated with each region of any deformation of a graphical arrangement. Using this graph structure, Hetyei further demonstrated  that the regions  of such deformation can be bijectively labeled by a set of $m$-acyclic weighted digraphs, which contain only directed cycles with negative weights. Moreover, the relatively bounded regions correspond precisely to the strongly connected $m$-acyclic weighted digraphs. The weighted digraph model is a key approach for establishing our results.
	
	Describing a region in a deformation of the braid arrangement $\mathcal{B}_n$ amounts to determining whether a system of linear inequalities of the form
	\begin{equation}\label{Def-Weighted-Digraph}
		m_{ij}<x_i-x_j<M_{ij},\quad 1\le i<j\le n
	\end{equation}
	has a solution in $\mathbb{R}^n$. We assume that $m_{ij}<M_{ij}$ holds for any pair $i<j$, and allow $m_{ij}=-\infty$ and $M_{ij}=\infty$ respectively. Hetyei referred to  the solution set of a system of linear inequalities of the form \eqref{Def-Weighted-Digraph} as a {\em weighted digraphical polytope} (see also \cite[Definition 2.1]{Hetyei2024}). The following definition characterizes how to construct a weighted digraph from the  system of linear inequalities as given in \eqref{Def-Weighted-Digraph}.
	\begin{definition}[\cite{Hetyei2024}, Definition 2.3]\label{WD}
		{\rm For every system of linear inequalities \eqref{Def-Weighted-Digraph}, the associated {\em weighted digraph} is constructed as follows: For each $i<j$, if $m_{ij}>-\infty$, we create a directed edge $i\rightarrow j$ with weight $m_{ij}$; if $M_{ij}<\infty$, we create a directed edge $i\leftarrow j$ with weight $-M_{ij}$. An {\em $m$-ascending cycle} in the associated weighted digraph is a directed cycle, along which the sum of the weights is non-negative. The associated weighted digraph is called {\em $m$-acyclic} if it contains no $m$-ascending cycle.
		}
	\end{definition}
	Conversely, the associated weighted digraph uniquely encodes the system \eqref{Def-Weighted-Digraph} as well. Without loss of generality, we may assume $A=\{a_1<a_2<\cdots<a_m\}$ in $\mathbb{R}$. Then every region of the hyperplane arrangement $\mathcal{B}_n^A$ is a weighted digraphical polytope and can be described by a system of linear inequalities \eqref{Def-Weighted-Digraph}, where for each pair $1\le i<j\le n$, $m_{ij}$ is either $-\infty$ or some member of the set $A$ and $M_{ij}$ is given by the following formula:
	\[
	M_{ij}=\begin{cases}
		a_1, & \mbox{if }m_{ij}=-\infty; \\
		a_{k+1}, & \mbox{if  } m_{ij}=a_k \mbox{ for some }  k<m; \\
		\infty, & \mbox{if } m_{ij}=a_m.
	\end{cases}
	\]
	We continue introducing the {\em valid weighted digraph} (see also \cite[Definition 2.14]{Hetyei2024}) .
	\begin{definition}\label{Valid-WD}
		{\rm Let $A=\{a_1<a_2<\cdots<a_m\}$ be a set in $\mathbb{R}$. A weighted digraph on the vertex set $[n]$ is {\em valid} if for each pair $1\le i<j\le n$ satisfying exactly one of the following holds:
			\begin{itemize}
				\item [{\rm (1)}] there is no directed edge $i\rightarrow j$, and there is exactly one directed edge $i\leftarrow j$ of weight $-a_1$;
				\item [{\rm (2)}] there is a directed edge $i\rightarrow j$ of weight $a_k$, and there is a directed edge $i\leftarrow j$ of weight $-a_{k+1}$ for some $1\le k<m$;
				\item [{\rm (3)}] there is exactly one directed edge $i\rightarrow j$ of weight $a_m$, and there is no directed edge $i\leftarrow j$.
			\end{itemize}
		}
	\end{definition}
	
	Notice that the regions of level $1$ or the relatively bounded regions of  any deformation $\mathcal{A}$ of the braid arrangement $\mathcal{B}_n$ correspond precisely to the bounded regions of the corresponding essential arrangement ${\rm ess}(\mathcal{A}):=\{H\cap V_{n-1}:H\in\mathcal{A}\}$, where $V_{n-1}$ is the subspace in $\mathbb{R}^n$ of all vectors $(x_1,x_2,\ldots,x_n)$ such that $x_1+x_2+\cdots+x_n=0$. Therefore, \cite[Corollary 2.15]{Hetyei2024} actually establishes a bijection between the (relatively bounded) regions of any deformation of the braid arrangement and the (strongly connected) valid $m$-acyclic weighted digraphs. The next result is as a special case of this work.
	\begin{proposition}[\cite{Hetyei2024}, Corollary 2.15] \label{Bij-1}
		Let $A=\{a_1<a_2<\cdots<a_m\}$ be a set in $\mathbb{R}$. Then the regions of $\mathcal{B}_n^A$ are in bijection with the valid $m$-acyclic weighted digraphs on the vertex set $[n]$ in such a way that the relatively bounded regions correspond to the strongly connected valid $m$-acyclic weighted digraphs.
	\end{proposition}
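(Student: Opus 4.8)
The plan is to obtain the statement as the special case of \cite[Corollary 2.15]{Hetyei2024} in which the deformation is $\mathcal{B}_n^A$, i.e.\ the set of translates attached to every pair $1\le i<j\le n$ is one and the same set $A=\{a_1<\cdots<a_m\}$; since the discussion above already identifies the relatively bounded regions of a deformation $\mathcal{A}$ of $\mathcal{B}_n$ with the regions projecting onto the bounded regions of ${\rm ess}(\mathcal{A})$, nothing further is formally needed. For a self-contained derivation from \autoref{WD} and \autoref{Valid-WD} I would proceed in three steps. First, I would parametrize regions by valid weighted digraphs. A region $R$ of $\mathcal{B}_n^A$ is connected and misses every hyperplane $x_i-x_j=a_k$, so on $R$ each difference $x_i-x_j$ with $i<j$ lies in exactly one of the open intervals $(-\infty,a_1),(a_1,a_2),\ldots,(a_{m-1},a_m),(a_m,\infty)$; this choice fixes $m_{ij}\in\{-\infty\}\cup A$, hence $M_{ij}$ by the formula displayed above, hence a weighted digraph, which is valid by \autoref{Valid-WD}. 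Conversely, a valid weighted digraph encodes a system \eqref{Def-Weighted-Digraph} whose solution set $S$, when non-empty, is open, convex, and disjoint from all the hyperplanes $x_i-x_j=a_k$; for $p\in S$ every point reachable from $p$ inside $\mathbb{R}^n\setminus\bigcup_H H$ keeps each $x_i-x_j$ inside $(m_{ij},M_{ij})$, so the region through $p$ is exactly $S$. Thus regions correspond bijectively to valid weighted digraphs with non-empty solution set.

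Second, I would show that non-emptiness is equivalent to $m$-acyclicity. Along a directed cycle $v_1\to\cdots\to v_k\to v_1$ with weights $w_1,\ldots,w_k$, each edge $v_t\to v_{t+1}$ forces $x_{v_t}-x_{v_{t+1}}>w_t$, and summing telescopes to $0>w_1+\cdots+w_k$, so the presence of an $m$-ascending cycle makes the system infeasible. Conversely, if every directed cycle has strictly negative total weight, then, there being only finitely many cycles, one may replace each strict inequality $x_u-x_v>w$ by $x_u-x_v\ge w+\varepsilon$ for a sufficiently small $\varepsilon>0$ and obtain a system of difference constraints with no directed cycle of positive total weight, which is feasible by the classical potential (shortest-path) criterion. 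Together with the first step this gives the bijection between regions of $\mathcal{B}_n^A$ and valid $m$-acyclic weighted digraphs.

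Third, I would treat the ``relatively bounded'' clause. Every region of $\mathcal{B}_n^A$ contains a translate of the line spanned by $(1,\ldots,1)$, and it is relatively bounded exactly when all the differences $x_i-x_j$ are bounded on it. If the digraph $D$ of $R$ is strongly connected, then for each pair $i,j$ a directed $i$-to-$j$ path and a directed $j$-to-$i$ path in $D$ give, on summing the edge inequalities along them, a lower and an upper bound for $x_i-x_j$ on $R$, so $R$ is relatively bounded. If $D$ is not strongly connected, list its strong components $C_1,\ldots,C_p$ with $p\ge 2$ in a topological order of the condensation; the key point is that a pair $\{v,u\}$ lying in two different components cannot be of type (2) of \autoref{Valid-WD}, since type (2) would put a $2$-cycle through $v$ and $u$, so distinct components are joined only by a single one-sided edge, necessarily oriented from the earlier to the later component. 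Translating all coordinates indexed by $C_1$ upward by an arbitrary $t\ge 0$ then preserves every inequality defining $R$, because intra-component differences are unchanged and the one-sided cross-edges only gain slack, so $x_v-x_u$ is unbounded on $R$ for $v\in C_1$ and $u\notin C_1$, and $R$ fails to be relatively bounded.

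I expect the interval description of regions and the difference-constraint feasibility criterion to be routine, and the one genuinely delicate point to be this last equivalence, in particular the implication that a region whose digraph is not strongly connected cannot be relatively bounded; its proof hinges on combining the topological ordering of the condensation with the observation, forced by validity, that edges between distinct strong components are one-sided and consistently oriented.
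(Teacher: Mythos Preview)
Your proposal is correct. The paper itself does not prove \autoref{Bij-1}; it merely quotes it as the special case of \cite[Corollary~2.15]{Hetyei2024} obtained by taking the same translate set $A$ for every pair $i<j$, so there is nothing to compare step by step. Your three-step reconstruction---parametrizing regions by valid weighted digraphs, establishing feasibility of the difference-constraint system precisely when the digraph is $m$-acyclic via the potential (Bellman--Ford) criterion with an $\varepsilon$-slackening, and characterizing relative boundedness via strong connectedness---is a faithful and correct unpacking of Hetyei's argument. The one point worth polishing is in your third step: when you pick the source component $C_1$ of the condensation and translate its coordinates upward, you should make explicit that the single cross-edge between $v\in C_1$ and $u\notin C_1$ is always oriented $v\to u$ regardless of whether $v<u$ or $v>u$ (type~(3) in the first case, type~(1) in the second), and in both cases the corresponding strict inequality reads $x_v-x_u>\text{const}$, so adding $t\ge 0$ to $x_v$ only increases the slack; you state this conclusion but the case split is worth one line.
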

	
	Importantly, the $m$-acyclic property can be verified independently for each strong component of the weighted digraph. Building on this property, Hetyei further provided a structural theorem in \cite[Theorem 2.16]{Hetyei2024}, which illustrates that the associated weighted digraph of every region in any deformation of the braid arrangement $\mathcal{B}_n$ can be uniquely determined and constructed from strongly connected valid $m$-acyclic weighted digraphs. The restriction of  \cite[Theorem 2.16]{Hetyei2024} to $\mathcal{B}_n^A$ is the following statement.
	\begin{proposition}[\cite{Hetyei2024}, Theorem 2.16]\label{Bij-2}
		Let $A=\{a_1<a_2<\cdots<a_m\}$ be a set in $\mathbb{R}$. Then the associated weighted digraph of any region of $\mathcal{B}_n^A$ may be uniquely constructed as follows:
		\begin{itemize}
			\item [\rm{(1)}] For a fixed ordered set partition $(B_1,B_2,\ldots,B_l)$ of the set $[n]$, the parts of the ordered set partition correspond to the vertex sets of the strong components. For any pair $(i, j)$ having the property that the part containing $i$ precedes the part containing $j$, there exists a directed edge $i\rightarrow j$ with a corresponding weight, but no directed edge $i\leftarrow j$ is present. Specifically, the weight on $i\rightarrow j$ is $a_m$ if $i<j$ and $-a_1$ if $i>j$.
			\item [\rm{(2)}] For each strong component on the vertex set $B_i$, we independently select a strongly connected valid $m$-acyclic weighted digraph.
		\end{itemize}
	\end{proposition}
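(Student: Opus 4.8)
The statement is the specialization of \cite[Theorem 2.16]{Hetyei2024} to $\mathcal{B}_n^A$, so the plan is to carry out Hetyei's decomposition argument in this concrete setting. First I would start from a region of $\mathcal{B}_n^A$ and let $D$ be its associated weighted digraph as in \autoref{WD}; by \autoref{Bij-1} together with the discussion preceding it, $D$ is a valid $m$-acyclic weighted digraph on $[n]$, and for every pair $i<j$ exactly one of the three configurations in \autoref{Valid-WD} occurs, so in particular there is always at least one directed edge joining $i$ and $j$. Let $C_1,\dots,C_l$ be the strong components of $D$; they partition $[n]$, and any directed cycle of $D$ lies inside a single component, since the endpoints of an edge on a directed cycle are strongly connected.

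Next I would show that the condensation of $D$ is a linear order. If $u,v$ lie in distinct strong components, then all edges of $D$ between them point the same way: an edge $u\to v$ together with an edge $v\to u$ would place $u$ and $v$ on a common directed cycle, forcing them into the same component. Since, as noted, there is always an edge between $u$ and $v$, the condensation is an acyclic digraph in which every two vertices are joined by exactly one directed edge, i.e.\ a transitive tournament, hence a total order. Relabel the components as $B_1,\dots,B_l$ so that $B_s$ precedes $B_t$ precisely when $s<t$; this ordered set partition is then uniquely determined by $D$.

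Then I would pin down the inter-component edges. Fix $i\in B_s$ and $j\in B_t$ with $s<t$, so every edge of $D$ between $i$ and $j$ runs from $B_s$ to $B_t$. If $i<j$, configurations (1) and (2) of \autoref{Valid-WD} each contain an edge $i\leftarrow j$, which would run from $B_t$ back to $B_s$ — a contradiction; hence configuration (3) holds and $D$ has the single edge $i\to j$ of weight $a_m$. If $i>j$, applying \autoref{Valid-WD} to the pair $(j,i)$, configurations (2) and (3) each contain an edge $j\to i$, again running backwards; hence configuration (1) holds and $D$ has the single edge $j\leftarrow i$, that is $i\to j$, of weight $-a_1$. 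This is exactly item~(1). Moreover the subdigraph induced on each $B_t$ is strongly connected by definition of strong component, is valid because validity is a condition on pairs within $B_t$ which $D$ already satisfies, and is $m$-acyclic as a subdigraph of $D$; this is item~(2). Thus every region of $\mathcal{B}_n^A$ gives data of the stated form, and uniquely so.

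Finally I would check the converse: given an ordered set partition $(B_1,\dots,B_l)$ of $[n]$, a strongly connected valid $m$-acyclic weighted digraph on each $B_t$, and the inter-part edges prescribed in item~(1), the resulting weighted digraph $D'$ is valid (each inter-part pair falls into configuration (3) when the smaller label lies in the earlier part, and into configuration (1) otherwise), its strong components are exactly $B_1,\dots,B_l$ (each $B_t$ is strongly connected internally, while inter-part edges run only forward and create no directed cycle), and it is $m$-acyclic — using, as recalled just before the proposition, that $m$-acyclicity can be tested strong component by strong component, and each component of $D'$ was chosen $m$-acyclic. Hence $D'$ labels a region of $\mathcal{B}_n^A$ by \autoref{Bij-1}, and the two assignments region $\mapsto$ data and data $\mapsto D' \mapsto$ region are mutually inverse. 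I expect the main obstacle to be the forcing argument of the third paragraph — that the validity trichotomy, combined with the ``no backward edge between components'' constraint, leaves only the extremal weights $a_m$ and $-a_1$; the remaining steps are bookkeeping, and the component-wise nature of $m$-acyclicity is already available from the text.
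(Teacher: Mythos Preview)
Your proposal is correct and complete. Note, however, that the paper does not actually prove this proposition: it is stated as the restriction of \cite[Theorem 2.16]{Hetyei2024} to $\mathcal{B}_n^A$ and then used without further argument, so there is no in-paper proof to compare against. Your reconstruction of Hetyei's decomposition argument --- identifying the strong components, showing the condensation is a transitive tournament via the validity trichotomy, forcing the extremal weights $a_m$ and $-a_1$ on inter-component edges, and checking the converse --- is exactly the expected route and would serve as a self-contained substitute for the citation.
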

	
	\subsection{Proof of \autoref{Main1}}
	Before proving \autoref{Main1}, the following lemma is also required.
	\begin{lemma}\label{Lem-1}
		Let $X$ and $Y$ be two sets in $\mathbb{R}^n$. If there exists $r>0$ such that
		\[
		X\subseteq\big\{\bm x \in \mathbb{R}^n : \min_{\bm y \in Y} \|\bm x - \bm y\| \le r \big\},
		\]
		then $l(X)\le l(Y)$.
	\end{lemma}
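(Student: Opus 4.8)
The plan is to unwind the definition of level and chain two ``tube'' containments with the triangle inequality. Set $l := l(Y)$. By the definition of level there exist a linear subspace $W \subseteq \mathbb{R}^n$ with $\dim W = l$ and a real number $r' > 0$ such that
\[
Y \subseteq \big\{\bm y \in \mathbb{R}^n : \min_{\bm w \in W}\|\bm y - \bm w\| \le r'\big\};
\]
here the inner minimum is genuinely attained, since $W$ is closed and $\min_{\bm w \in W}\|\bm y - \bm w\|$ is the distance from $\bm y$ to its orthogonal projection onto $W$. The idea is then to show that the \emph{same} subspace $W$, enlarged to radius $r + r'$, already captures $X$; by the minimality built into the definition of $l(X)$ this will force $l(X) \le \dim W = l(Y)$.

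First I would fix an arbitrary point $\bm x \in X$. The hypothesis supplies $\bm y \in Y$ with $\|\bm x - \bm y\| \le r$, and membership $\bm y \in Y$ then supplies $\bm w \in W$ with $\|\bm y - \bm w\| \le r'$. The triangle inequality gives $\|\bm x - \bm w\| \le \|\bm x - \bm y\| + \|\bm y - \bm w\| \le r + r'$, hence $\min_{\bm w \in W}\|\bm x - \bm w\| \le r + r'$. Since $\bm x \in X$ was arbitrary,
\[
X \subseteq \big\{\bm x \in \mathbb{R}^n : \min_{\bm w \in W}\|\bm x - \bm w\| \le r + r'\big\}.
\]
As $r + r' > 0$ and $W$ is a subspace of dimension $l(Y)$, the definition of $l(X)$ as the least dimension of a subspace admitting such a containment yields $l(X) \le \dim W = l(Y)$.

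I do not expect any serious obstacle: the whole argument is a two-step triangle inequality. The only point deserving a line of care is whether the two occurrences of ``$\min$''—the one in the hypothesis and the one in the definition of level—are actually attained. For the distance to the subspace $W$ this is automatic (orthogonal projection). For the distance to $Y$, if one reads $\min_{\bm y \in Y}$ as an infimum, then for each $\varepsilon > 0$ one picks $\bm y \in Y$ with $\|\bm x - \bm y\| < r + \varepsilon$, obtains $\min_{\bm w \in W}\|\bm x - \bm w\| \le r + r' + \varepsilon$, and lets $\varepsilon \to 0^+$; the conclusion is unchanged. The degenerate case $Y = \varnothing$ forces $X = \varnothing$, so $l(X) = 0 \le l(Y)$ trivially.
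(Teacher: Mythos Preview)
Your proof is correct and follows essentially the same route as the paper: pick a minimal-dimensional subspace $W$ realizing $l(Y)$, then chain the two distance bounds via the triangle inequality to place $X$ in the $(r+r')$-tube about $W$. Your additional remarks on attainment of the minima and the degenerate empty case are more careful than the paper, which simply takes the existence of the minimizers for granted.
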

	\begin{proof}
		Suppose $W$ is the minimal dimensional subspace of $\mathbb{R}^n$ such that there exists $r' > 0$ satisfying
		\begin{equation}\label{EQ2-1}
			Y\subseteq \big\{\bm x \in \mathbb{R}^n : \min_{\bm w \in W} \|\bm x - \bm w\| \le r' \big\}.
		\end{equation}
		The condition $X\subseteq\big\{\bm x \in \mathbb{R}^n : \min_{\bm y \in Y} \|\bm x - \bm y\| \le r \big\}$ implies that for any $\bm x \in X$, there exists $\bm y \in Y$ such that $\| \bm x - \bm y \|\le r$.
		It follows from \eqref{EQ2-1} that for such $\bm y \in Y$, there is $\bm w\in W$ for which $\|\bm y-\bm w\|\le r'$. Then we have
		\[
		\|\bm x - \bm w \|\le \|\bm x-\bm y\| +\|\bm y-\bm w \|\le r+r'.
		\]
		This means
		\[
		X\subseteq\big\{\bm x \in \mathbb{R}^n : \min_{\bm w \in W} \|\bm x - \bm w\| \le r+r' \big\}.
		\]
		Therefore, $l(X)\le \dim W =l(Y)$, which finishes the proof.
	\end{proof}
	
	For convenience, we denote by $D(R)$ the associated $m$-acyclic weighted digraph of any region $R$ of $\mathcal{B}_n^A$.
	\begin{lemma}\label{Lem-2}
		Let $A=\{a_1<\cdots<a_m\}$ be a set in $\mathbb{R}$. Given a region $R$ of $\mathcal{B}_n^A$ and a point $\bm x=(x_1,\ldots,x_n)\in R$. If $i$ and $j$ belong to some strong component $N$ of $D(R)$, then we have
		\[
		|x_i-x_j|\le|V(N)|^2a,
		\]
		where $a=\max\{|a_1|,|a_m|\}$.
	\end{lemma}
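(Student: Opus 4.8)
The plan is to bound $|x_i - x_j|$ for two vertices $i,j$ in a common strong component $N$ of $D(R)$ by exhibiting a short directed path between them in $D(R)$ and summing up the constraints along that path. First I would observe that, since $i$ and $j$ lie in the same strong component $N$, there is a directed $(i,j)$-walk inside $N$. By deleting cycles, such a walk may be taken to be a directed path, hence of length at most $|V(N)| - 1$ edges, using only vertices of $V(N)$. Along any directed edge $u \to v$ of $D(R)$ with weight $w_{uv}$, the defining inequalities \eqref{Def-Weighted-Digraph} (and the translation in \autoref{WD} between $m_{uv}, M_{uv}$ and edge weights) give $x_u - x_v > w_{uv}$, i.e. $x_u - x_v > w_{uv} \ge -a$, where $a = \max\{|a_1|, |a_m|\}$ since every weight appearing in a valid weighted digraph is one of $a_1, \ldots, a_m$ or $-a_1, \ldots, -a_m$. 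Telescoping along the directed $(i,j)$-path of length $\le |V(N)|-1$ yields
\[
x_i - x_j = \sum_{\text{edges } u\to v \text{ on the path}} (x_u - x_v) > -(|V(N)|-1)a \ge -|V(N)|^2 a.
\]

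Next I would apply the same argument to a directed $(j,i)$-path in $N$ (which exists by strong connectedness), obtaining $x_j - x_i > -(|V(N)|-1)a \ge -|V(N)|^2 a$, equivalently $x_i - x_j < |V(N)|^2 a$. Combining the two bounds gives $|x_i - x_j| < |V(N)|^2 a$, which is stronger than the stated inequality; writing $\le$ in the statement is harmless. (If one prefers to avoid even the minor slack, note $(|V(N)|-1)a \le |V(N)|^2 a$ trivially for $|V(N)| \ge 1$.)

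The only point requiring a little care is the bookkeeping of signs: one must check that for \emph{every} directed edge $u \to v$ actually present in $D(R)$ — whether it is a ``lower bound'' edge of weight $m_{uv} \in A$ (from case (2) or (3) of \autoref{Valid-WD}, or case (1) gives no such edge) or arises as a ``$-M$'' edge of weight $-a_1$ in the reverse direction — the weight is bounded below by $-a$. This is immediate from $a = \max\{|a_1|,|a_m|\}$ together with $a_1 \le a_k \le a_m$ for all $k$, so $|a_k| \le a$ and in particular $a_k \ge -a$ and $-a_k \ge -a$. I do not expect any genuine obstacle here; the lemma is a routine telescoping estimate, and the factor $|V(N)|^2$ (rather than $|V(N)|-1$) is deliberately generous so that the subsequent application — presumably showing that a strong component contributes a bounded region in the appropriate quotient, and hence using \autoref{Lem-1} to pin down the level — has room to spare.
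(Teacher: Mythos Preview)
Your argument is correct and in fact cleaner than the paper's. The paper instead fixes a single directed cycle $C=i_0\to i_1\to\cdots\to i_t\to i_0$ in $N$ passing through $i=i_0$ and $j=i_s$, and for each edge $i_p\to i_{p+1}$ on $C$ bounds $x_{i_p}-x_{i_{p+1}}$ from \emph{both} sides: the lower bound $>-a$ comes exactly as you describe, while an upper bound is obtained by traversing the \emph{remainder} of the cycle from $i_{p+1}$ back to $i_p$ and telescoping, yielding $x_{i_p}-x_{i_{p+1}}<(t+1)a$. Summing over the $s$ edges from $i$ to $j$ then gives $|x_i-x_j|<s(t+1)a\le|V(N)|^2a$. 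Your route sidesteps the cycle entirely: two separate shortest directed paths (one $i\to j$, one $j\to i$), each of length at most $|V(N)|-1$, together with only the per-edge lower bound $x_u-x_v>-a$, already deliver $|x_i-x_j|<(|V(N)|-1)a$, a strictly sharper estimate. The paper's quadratic factor appears precisely because its upper-bound-per-edge step introduces an extra factor of the cycle length; your observation that the two-path symmetry renders any per-edge \emph{upper} bound unnecessary is the genuine simplification. Either bound is ample for the only application, in the proof of \autoref{Main-Lel}.
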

	\begin{proof}
		Since $D(R)$ is a valid $m$-acyclic weighted digraph, according to the definition of the valid $m$-acyclic weighted digraph in \autoref{Valid-WD}, for any directed edge $h\rightarrow g$ in $D(R)$, we have the following properties:
		\begin{equation}\label{Basic-Fact}
			x_h-x_g>a_q \mbox{ if } h<g \quad\mbox{ and }\quad x_h-x_g>-a_q \mbox{ if } h>g \mbox{ for some } q\in[m].
		\end{equation}
		As $i$ and $j$ belong to the same strong component $N$, there is a directed cycle $C=i_0\rightarrow i_1\rightarrow\cdots\rightarrow i_{s-1}\rightarrow i_s\rightarrow i_{s+1}\rightarrow\cdots\rightarrow i_t\rightarrow i_0$ in $N$ containing the vertices $i$ and $j$, where $i_0=i$ and $i_s=j$. Now, we consider the directed edge $i_p\rightarrow i_{p+1}$ in the directed cycle $C$.  When $i_p<i_{p+1}$, we have $a_q<x_{i_p}-x_{i_{p+1}}$ for some $q\in[m]$ via \eqref{Basic-Fact}. If $q<m$, then $a_q<x_{i_p}-x_{i_{p+1}}<a_{q+1}$. If $q=m$, using \eqref{Basic-Fact} again, the directed path $i_{p+1}\rightarrow i_{p+2}\rightarrow\cdots\rightarrow i_p$ in $C$ implies that
		\begin{align*}
			&x_{i_{p+1}}-x_{i_p}=(x_{i_{p+1}}-x_{i_{p+2}})+(x_{i_{p+2}}-x_{i_{p+3}})+\cdots+(x_{i_{p-1}}-x_{i_p})\\
			&\ge b_{p+1}+b_{p+2}+\cdots+b_{p-1},
		\end{align*}
		where each $b_k$ represents $a_w$ or $-a_w$ for some $w\in[m]$. Hence, we obtain
		\[
		-a\le a_m<x_{i_p}-x_{i_{p+1}}<-(b_{p+1}+b_{p+2}+\cdots+b_{p-1})\le(t+1)a
		\]
		in this case. We conclude $-a<x_{i_p}-x_{i_{p+1}}<(t+1)a$ in the case that $i_p<i_{p+1}$. Likewise, we can prove $-a<x_{i_p}-x_{i_{p+1}}<(t+1)a$ when $i_p>i_{p+1}$ as well. Therefore, we deduce
		\[
		-sa<x_i-x_j=x_{i_0}-x_{i_s}=(x_{i_0}-x_{i_1})+(x_{i_1}-x_{i_2})+\cdots+(x_{i_{s-1}}-x_{i_s})<s(t+1)a.
		\]
		So $|x_i-x_j|<s(t+1)a\le |V(N)|^2a$, which finishes the proof.
	\end{proof}
	The following result states that the level of any region of $\mathcal{B}_n^A$ is precisely the number of strong components of the associated $m$-acyclic weighted digraph.
	\begin{theorem}\label{Main-Lel}
		Let $A=\{a_1,\ldots,a_m\}$ consist of distinct real numbers. For any region $R$ of $\mathcal{B}_n^A$, the level of $R$ equals the number of strong components of $D(R)$.
	\end{theorem}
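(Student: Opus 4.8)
The plan is to bound the region $R$ between two cones of the right dimension and apply \autoref{Lem-1} twice, once in each direction.

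First I would set up the lower bound $l(R)\ge l$, where $l$ is the number of strong components of $D(R)$. Fix an ordered set partition $(B_1,\dots,B_l)$ of $[n]$ as in \autoref{Bij-2}, with $|B_i|=n_i$, so that the strong components of $D(R)$ have vertex sets $B_1,\dots,B_l$. Fix a base point $\bm x^{0}\in R$. Using \autoref{Lem-2}, every point $\bm x\in R$ satisfies $|x_i-x_j|\le n^2 a$ whenever $i,j$ lie in the same part $B_k$; hence $\bm x$ lies within distance $\sqrt{n}\,n^2 a$ (a fixed radius) of the linear subspace $W$ of dimension $l$ consisting of all vectors that are constant on each block $B_k$. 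By the minimality in the definition of level, $l(R)\le\dim W=l$. For the reverse inequality I would exhibit inside $R$ an affine $l$-dimensional flat, or rather a translate of a full-dimensional cone in an $l$-dimensional affine subspace: moving $\bm x^{0}$ by adding an arbitrary constant $c_k$ to all coordinates indexed by $B_k$ keeps all the "same-component" differences unchanged and changes each "cross-component" difference $x_i-x_j$ (with $i\in B_{k}$, $j\in B_{k'}$, $k<k'$) by $c_k-c_{k'}$; by \autoref{Bij-2}(1) those differences are unbounded above in $R$ (the only constraint is $x_i-x_j>a_m$ or $>-a_1$), so taking $c_1\gg c_2\gg\cdots\gg c_l$ keeps us in $R$. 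Thus $R$ contains a translated $l$-dimensional cone $C$ with $\mathrm{Rec}(C)$ of dimension $l$; since $l(C)=\dim\mathrm{Rec}(C)=l$ and trivially $C\subseteq\{\bm x:\min_{\bm y\in C}\|\bm x-\bm y\|\le r\}$ for, say, $r=1$, \autoref{Lem-1} applied with $X=C$, $Y=R$... — actually here I want $l(R)\ge l(C)=l$, which follows since $C\subseteq R\subseteq R$ trivially gives $l(C)\le l(R)$ by \autoref{Lem-1} with $r$ arbitrary. Combining, $l(R)=l$.

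The key steps, in order: (i) invoke \autoref{Bij-2} to record the strong-component structure and the shape of the cross-component edges; (ii) use \autoref{Lem-2} to confine $R$ to a bounded neighborhood of the $l$-dimensional "block-constant" subspace $W$, giving $l(R)\le l$ via the definition of level; (iii) construct, from a fixed point of $R$, an explicit $l$-parameter family of points of $R$ by pushing the blocks apart in the order dictated by the partition, verifying membership in $R$ using only the cross-component inequalities of \autoref{Bij-2}(1) and the within-component inequalities (which are translation-invariant); (iv) identify this family as a translate of a cone $C$ with $\dim\mathrm{Rec}(C)=l$, so $l(C)=l$, and conclude $l(R)\ge l(C)=l$ by \autoref{Lem-1}.

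The main obstacle is step (iii): one must check carefully that the perturbation $\bm x^{0}\mapsto\bm x^{0}+\sum_k c_k\mathbf{1}_{B_k}$ with $c_1>c_2>\cdots>c_l$ (all differences $c_k-c_{k'}$ large) genuinely stays inside the \emph{same} region $R$, i.e. that no hyperplane of $\mathcal{B}_n^A$ is crossed. Within-block differences are unchanged, so those inequalities persist; the subtlety is the cross-block pairs $(i,j)$ — here \autoref{Bij-2}(1) tells us the only active constraint in $R$ is a one-sided bound ($x_i-x_j>a_m$ when $i<j$, or $x_i-x_j>-a_1$ when $i>j$), and increasing $x_i-x_j$ by a large positive amount only strengthens it, never reaching the next hyperplane since there is none above; this is exactly why the recession cone has full dimension $l$ inside the quotient. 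Making this monotonicity argument precise — and handling the direction of the inequality when $i>j$ — is the part that needs care, but it is routine once the structure from \autoref{Bij-2} is in hand.
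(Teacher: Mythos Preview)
Your proposal is correct and follows essentially the same route as the paper: bound $R$ inside a tube around the $l$-dimensional block-constant subspace $W$ via \autoref{Lem-2} to get $l(R)\le l$, and exhibit a translated $l$-dimensional cone inside $R$ (pushing the blocks apart in the order dictated by \autoref{Bij-2}(1)) to get $l(R)\ge l$. One cosmetic slip: your opening line announces the ``lower bound $l(R)\ge l$'' but the argument that follows is actually the upper bound $l(R)\le l$; the mathematics is fine, only the label is swapped.
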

	\begin{proof}
		Without loss of generality, suppose $A=\{a_1<a_2<\cdots<a_m\}$ in $\mathbb{R}$. From \autoref{Bij-2}, we may assume that the vertex sets of strong components of $D(R)$ induce an ordered set partition $(B_1,B_2,\ldots,B_l)$ of the set $[n]$ satisfying the properties outlined in \autoref{Bij-2}. To prove $l(R)\le l$, we construct a subspace $W(R)$ in $\mathbb{R}^n$ as follows:
		\[
		W(R):=\big\{\bm x=(x_1,\ldots,x_n)\in\mathbb{R}^n:x_i=x_j \mbox{ if } i,j\in B_k \mbox{ for some }1\le k\le l\big\}.
		\]
		Clearly the subspace $W(R)$ has dimension $l$ and the level $l\big(W(R)\big)=l$. Therefore, from \autoref{Lem-1}, we only need to show the following relation
		\[
		R\subseteq \big\{\bm x \in \mathbb{R}^n : \min_{\bm w \in W(R)} \|\bm x - \bm w\| \le r \big\}
		\]
		for some positive real number $r$. Without loss of generality, for each $k=1,2,\ldots,l$, suppose $B_k=\{i_{k-1}+1,\ldots,i_k\}$ with $i_0=0$, $i_{k-1}+1\le i_k\le n$ and $i_l=n$. For any point $\bm x=(x_1,\ldots,x_n)\in R$, we take $\bm w=(w_1,\ldots,w_n)\in W(R)$ satisfying
		\[
		w_{i_{k-1}+1}=w_{i_{k-1}+2}=\cdots=w_{i_k}=x_{i_k}, \quad k=1,2,\ldots,l.
		\]
		By \autoref{Lem-2}, we have
		\[
		|x_i-x_{i_k}|\le(i_k-i_{k-1})^2a\le n^2a
		\]
		for any $i\in B_k$, where $a=\max\{|a_1|,|a_m|\}$. Then we deduce
		\begin{align*}
			\|\bm x-\bm w\|&=\sqrt{\sum_{k=1}^l\sum_{i=i_{k-1}+1}^{i_k}(x_i-x_{i_k})^2}\le n^2\sqrt na.
		\end{align*}
		So, we obtain $l(R)\le l$.
		
		On the other hand, to verify $l(R)\ge l$, we construct a cone $C(R)$ in $\mathbb{R}^n$ as follows:
		\[
		C(R):=\big\{\bm x=(x_1,\ldots,x_n)\in W(R): x_i\ge x_j \mbox{ for } i\in B_k, j\in B_s \mbox{ with }1\le k<s\le n\big\}.
		\]
		Obviously, the cone $C(R)$ has dimension $l$ and the level $l\big(C(R)\big)=l$. Given a point $\bm z=(z_1,\ldots, z_n)\in R$, we assert
		\[
		\bm z+C(R) :=\big\{\bm z+\bm x =(z_1+x_1,\ldots, z_n+x_n) : \bm x=(x_1,\ldots, x_n)\in C(R) \big\}\subseteq R.
		\]
		Notably, for any point $\bm x\in C(R)$, we have that
		\begin{itemize}
			\item [{\rm (a)}] if $i$ and $j$ belong to the same part, then $\left( z_i+x_i\right)-\left(z_j+x_j \right)=z_i-z_j$;
			\item [{\rm (b)}] if the part containing $i$ precedes the part containing $j$, then
			\[
			(x_i+z_i)-(x_j+z_j)\ge z_i-z_j>a_m\quad\mbox{if}\quad i<j
			\]
			and
			\[
			(x_i+z_i)-(x_j+z_j)\ge z_i-z_j>-a_1\mbox{ if }i>j\mbox{ via the property (1) in \autoref{Bij-2}}.
			\]
		\end{itemize}
		Hence, $\bm z+\bm x\in R$ for all $\bm x\in C(R)$, that is, $\bm z+C(R)\subseteq R$. We further derive
		\[
		l=l\big(C(R)\big)=l\big(\bm z+C(R)\big)\le l(R).
		\]
		So we obtain $l(R)=l$, which completes the proof.
	\end{proof}
	As a direct consequence of \autoref{Bij-1} and \autoref{Main-Lel} , we have the next result. In fact, \autoref{Bij-3} can be generalized to any deformation of the braid arrangement as the form \eqref{Deformation}. However, a similar but more complicated discussion will be omitted here and left for interested readers to explore.
	\begin{corollary}\label{Bij-3}
		Let $A=\{a_1<a_2<\cdots<a_m\}$ be a set in $\mathbb{R}$. For a fixed positive integer $l$, the regions of level $l$ of $\mathcal{B}_n^A$ are in bijection with the valid $m$-acyclic weighted digraphs on the vertex set $[n]$ that have exactly $l$ strong components.
	\end{corollary}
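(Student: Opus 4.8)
The plan is to restrict the bijection of \autoref{Bij-1} to the subfamily of regions picked out by their level, using \autoref{Main-Lel} to rephrase the level condition as a condition on the number of strong components. In other words, \autoref{Bij-3} should be a formal consequence of the two previous results, with no new combinatorial input required.

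First I would recall from \autoref{Bij-1} that the assignment $R\mapsto D(R)$ is a bijection
\[
\Phi\colon \{\,\text{regions of }\mathcal{B}_n^A\,\}\ \longrightarrow\ \{\,\text{valid }m\text{-acyclic weighted digraphs on }[n]\,\}.
\]
Write $\mathcal{D}_l$ for the set of valid $m$-acyclic weighted digraphs on $[n]$ having exactly $l$ strong components. By \autoref{Main-Lel}, for every region $R$ the level $l(R)$ equals the number of strong components of $D(R)=\Phi(R)$; hence a region $R$ has level $l$ if and only if $\Phi(R)\in\mathcal{D}_l$, that is, the set of level-$l$ regions of $\mathcal{B}_n^A$ is exactly $\Phi^{-1}(\mathcal{D}_l)$. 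Since the restriction of a bijection to the preimage of any subset of its codomain is a bijection onto that subset, $\Phi$ restricts to a bijection from the regions of level $l$ of $\mathcal{B}_n^A$ onto $\mathcal{D}_l$, which is precisely the assertion of \autoref{Bij-3}. Concretely, injectivity is inherited from $\Phi$, and given $D\in\mathcal{D}_l$ there is by \autoref{Bij-1} a unique region $R$ with $\Phi(R)=D$, and $l(R)=l$ by \autoref{Main-Lel}, so $R$ lies in the domain; this settles surjectivity.

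There is no genuine obstacle at this stage: the technical heart is \autoref{Main-Lel} (and, beneath it, \autoref{Lem-1} and \autoref{Lem-2}), while \autoref{Bij-1} supplies the underlying labeling, so the corollary is purely formal. It may be worth recording two consistency checks in the write-up: taking $l=1$ recovers the statement that relatively bounded regions correspond to strongly connected valid $m$-acyclic weighted digraphs, and $\mathcal{D}_l=\varnothing$ for $l>n$ because an $n$-vertex digraph has at most $n$ strong components, matching $r_l(\mathcal{B}_n^A)=0$ for $l>n$.
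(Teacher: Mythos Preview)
Your argument is correct and matches the paper's approach: the paper states \autoref{Bij-3} as a direct consequence of \autoref{Bij-1} and \autoref{Main-Lel}, and your write-up simply spells out that restriction-of-bijection argument explicitly.
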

	Below we provide a small example to explain \autoref{Bij-3}.
	\begin{example}\label{Exa1}
		{\rm Let $\mathcal{B}_3^{[1,2]}=\big\{H_{ij}^k:x_i-x_j=k\mid 1\le i<j\le3,k=1,2\big\}$ be a hyperplane arrangement in $\mathbb{R}^3$, as illustrated in \autoref{Fig1}.  In \autoref{Fig1}, we describe the one-to-one correspondence between the regions of $\mathcal{B}_3^{[1,2]}$ and the associated valid $m$-acyclic weighted digraphs on the vertex set $[3]$. Specifically, every valid $m$-acyclic weighted digraph is precisely placed within the corresponding region of $\mathcal{B}_n^{[1,2]}$ in \autoref{Fig1}, where the blue, red and green valid $m$-acyclic weighted digraphs having $1$, $2$ and $3$ strong components in turn,  correspond to the regions of level $1$, $2$ and $3$ respectively.
			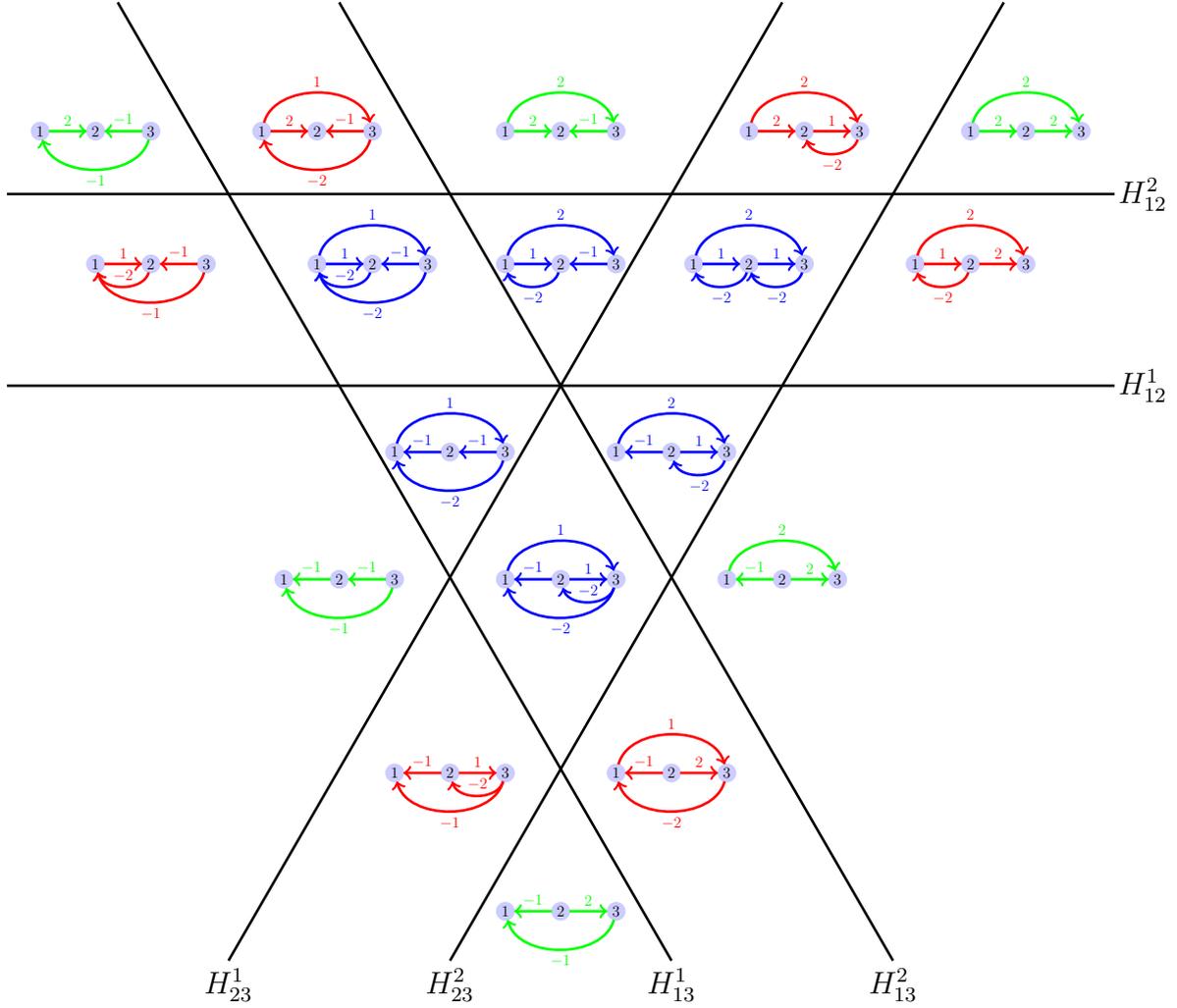
\begin{figure}[H]
				\centering
				\begin{tikzpicture}[scale=1.5,line width=1pt]
					\pgfmathsetmacro{\sqrtthree}{sqrt(3)}
					\pgfmathsetmacro{\twosqrtthree}{2*sqrt(3)}
					\pgfmathsetmacro{\threesqrtthree}{3*sqrt(3)}
					\draw (-5, 0) -- (5, 0) node[anchor=west,shift={(-0.1,0)}] {$H_{12}^1$};
					\draw(-5, \sqrtthree) -- (5,\sqrtthree) node[anchor= west,shift={(-0.1,0)}] {$H_{12}^2$};
					\draw (-4,\twosqrtthree) -- (1,-\threesqrtthree) node[anchor=south,shift={(0,-0.7)}] {$H_{13}^1$};
					\draw (-2, \twosqrtthree) -- (3,-\threesqrtthree) node[anchor=south,shift={(0,-0.7)}] {$H_{13}^2$};
					\draw (2, \twosqrtthree) -- (-3, -\threesqrtthree) node[anchor=south,shift={(0, -0.7)}] {$H_{23}^1$};
					\draw (4,\twosqrtthree) -- (-1, -\threesqrtthree) node[anchor=south,shift={(0,-0.7)}] {$H_{23}^2$};
					
					\node[circle, blue!20, draw, fill=blue!20, scale=0.5, inner sep=1pt] (v1) at (-4.7,2.3) {$\black 1$}; \node[circle, blue!20,draw, fill=blue!20, scale=0.5, inner sep=1pt]  (u1) at   (-4.2,2.3) {$\black 2$}; \node[circle,blue!20, draw, fill=blue!20,scale=0.5, inner sep=1pt]  (w1) at  (-3.7,2.3) {$\black 3$};
					\draw[green,-to] (v1) --node[scale=0.5, pos=0.5, above] {$2$} (u1) ;
					\draw[green,-to] (w1) to [bend left=75] node[scale=0.5, pos=0.5, below] {$-1$} (v1);
					\draw[green,-to] (w1)--node[scale=0.5, pos=0.5, above] {$-1$}(u1);
					
					\node[circle, blue!20, draw, fill=blue!20, scale=0.5, inner sep=1pt] (v2) at (-2.7,2.3) {$\black 1$}; \node[circle, blue!20,draw, fill=blue!20, scale=0.5, inner sep=1pt]  (u2) at   (-2.2,2.3) {$\black 2$}; \node[circle,blue!20, draw, fill=blue!20,scale=0.5, inner sep=1pt]  (w2) at  (-1.7,2.3) {$\black 3$};
					\draw[red,-to] (v2) --node[scale=0.5, pos=0.5, above] {$2$} (u2) ;
					\draw[red,-to] (v2) to [bend left=75] node[scale=0.5, pos=0.5, above] {$1$} (w2);\draw[red,-to] (w2) to [bend left=75] node[scale=0.5, pos=0.5, below] {$-2$} (v2);
					\draw[red,-to] (w2)--node[scale=0.5, pos=0.5, above] {$-1$}(u2);
					
					\node[circle, blue!20, draw, fill=blue!20, scale=0.5, inner sep=1pt] (v3) at (-0.5,2.3) {$\black 1$}; \node[circle, blue!20,draw, fill=blue!20, scale=0.5, inner sep=1pt]  (u3) at   (0,2.3) {$\black 2$}; \node[circle,blue!20, draw, fill=blue!20,scale=0.5, inner sep=1pt]  (w3) at  (0.5,2.3) {$\black 3$};
					\draw[green,-to] (v3) --node[scale=0.5, pos=0.5, above] {$2$} (u3) ;
					\draw[green,-to] (v3) to [bend left=75] node[scale=0.5, pos=0.5, above] {$2$} (w3);
					\draw[green,-to] (w3)--node[scale=0.5, pos=0.5, above] {$-1$}(u3);
					
					\node[circle, blue!20, draw, fill=blue!20, scale=0.5, inner sep=1pt] (v4) at (1.7,2.3) {$\black 1$}; \node[circle, blue!20,draw, fill=blue!20, scale=0.5, inner sep=1pt]  (u4) at   (2.2,2.3) {$\black 2$}; \node[circle,blue!20, draw, fill=blue!20,scale=0.5, inner sep=1pt]  (w4) at  (2.7,2.3) {$\black 3$};
					\draw[red,-to] (v4) --node[scale=0.5, pos=0.5, above] {$2$} (u4) ;
					\draw[red,-to] (v4) to [bend left=75] node[scale=0.5, pos=0.5, above] {$2$} (w4);
					\draw[red,-to] (u4)--node[scale=0.5, pos=0.5, above] {$1$}(w4); \draw[red,-to] (w4) to [bend left=75] node[scale=0.5, pos=0.5, below] {$-2$} (u4);
					
					\node[circle, blue!20, draw, fill=blue!20, scale=0.5, inner sep=1pt] (v5) at (3.7,2.3) {$\black 1$}; \node[circle, blue!20,draw, fill=blue!20, scale=0.5, inner sep=1pt]  (u5) at   (4.2,2.3) {$\black 2$}; \node[circle,blue!20, draw, fill=blue!20,scale=0.5, inner sep=1pt]  (w5) at  (4.7,2.3) {$\black 3$};
					\draw[green,-to] (v5) --node[scale=0.5, pos=0.5, above] {$2$} (u5) ;
					\draw[green,-to] (v5) to [bend left=75] node[scale=0.5, pos=0.5, above] {$2$} (w5);
					\draw[green,-to] (u5)--node[scale=0.5, pos=0.5, above] {$2$}(w5);
					
					\node[circle, blue!20, draw, fill=blue!20, scale=0.5, inner sep=1pt] (v6) at (-4.2,1.1) {$\black 1$}; \node[circle, blue!20,draw, fill=blue!20, scale=0.5, inner sep=1pt]  (u6) at   (-3.7,1.1) {$\black 2$}; \node[circle,blue!20, draw, fill=blue!20,scale=0.5, inner sep=1pt]  (w6) at  (-3.2,1.1) {$\black 3$};
					\draw[red,-to] (v6) --node[scale=0.5, pos=0.5, above] {$1$} (u6) ; \draw[red,-to] (u6) to [bend left=75] node[scale=0.5, pos=0.5, above] {$-2$} (v6); \draw[red,-to] (w6) to [bend left=75] node[scale=0.5, pos=0.5, below] {$-1$} (v6);
					\draw[red,-to] (w6)--node[scale=0.5, pos=0.5, above] {$-1$}(u6);
					
					\node[circle, blue!20, draw, fill=blue!20, scale=0.5, inner sep=1pt] (v7) at (-2.2,1.1) {$\black 1$}; \node[circle, blue!20,draw, fill=blue!20, scale=0.5, inner sep=1pt]  (u7) at   (-1.7,1.1) {$\black 2$}; \node[circle,blue!20, draw, fill=blue!20,scale=0.5, inner sep=1pt]  (w7) at  (-1.2,1.1) {$\black 3$};
					\draw[blue,-to] (v7) --node[scale=0.5, pos=0.5, above] {$1$} (u7) ; \draw[blue,-to] (u7) to [bend left=75] node[scale=0.5, pos=0.5, above] {$-2$} (v7); \draw[blue,-to] (v7) to [bend left=75] node[scale=0.5, pos=0.5, above] {$1$} (w7); \draw[blue,-to] (w7) to [bend left=75] node[scale=0.5, pos=0.5, below] {$-2$} (v7);
					\draw[blue,-to] (w7)--node[scale=0.5, pos=0.5, above] {$-1$}(u7);
					
					\node[circle, blue!20, draw, fill=blue!20, scale=0.5, inner sep=1pt] (v8) at (-0.5,1.1) {$\black 1$}; \node[circle, blue!20,draw, fill=blue!20, scale=0.5, inner sep=1pt]  (u8) at   (-0,1.1) {$\black 2$}; \node[circle,blue!20, draw, fill=blue!20,scale=0.5, inner sep=1pt]  (w8) at  (0.5,1.1) {$\black 3$}; \draw[blue,-to] (v8) --node[scale=0.5, pos=0.5, above] {$1$} (u8) ; \draw[blue,-to] (u8) to [bend left=75] node[scale=0.5, pos=0.5, below] {$-2$} (v8); \draw[blue,-to] (v8) to [bend left=75] node[scale=0.5, pos=0.5, above] {$2$} (w8);\draw[blue,-to] (w8)--node[scale=0.5, pos=0.5, above] {$-1$}(u8);
					
					\node[circle, blue!20, draw, fill=blue!20, scale=0.5, inner sep=1pt] (v9) at (1.2,1.1) {$\black 1$}; \node[circle, blue!20,draw, fill=blue!20, scale=0.5, inner sep=1pt]  (u9) at   (1.7,1.1) {$\black 2$}; \node[circle,blue!20, draw, fill=blue!20,scale=0.5, inner sep=1pt]  (w9) at  (2.2,1.1) {$\black 3$}; \draw[blue,-to] (v9) --node[scale=0.5, pos=0.5, above] {$1$} (u9) ; \draw[blue,-to] (u9) to [bend left=75] node[scale=0.5, pos=0.5, below] {$-2$} (v9); \draw[blue,-to] (v9) to [bend left=75] node[scale=0.5, pos=0.5, above] {$2$} (w9);\draw[blue,-to] (u9)--node[scale=0.5, pos=0.5, above] {$1$}(w9);
					\draw[blue,-to] (w9) to [bend left=75] node[scale=0.5, pos=0.5, below] {$-2$} (u9);
					
					\node[circle, blue!20, draw, fill=blue!20, scale=0.5, inner sep=1pt] (v10) at (3.2,1.1) {$\black 1$}; \node[circle, blue!20,draw, fill=blue!20, scale=0.5, inner sep=1pt]  (u10) at   (3.7,1.1) {$\black 2$}; \node[circle,blue!20, draw, fill=blue!20,scale=0.5, inner sep=1pt]  (w10) at  (4.2,1.1) {$\black 3$};
					\draw[red,-to] (v10) --node[scale=0.5, pos=0.5, above] {$1$} (u10) ; \draw[red,-to] (u10) to [bend left=75] node[scale=0.5, pos=0.5, below] {$-2$} (v10); \draw[red,-to] (v10) to [bend left=75] node[scale=0.5, pos=0.5, above] {$2$} (w10);
					\draw[red,-to] (u10)--node[scale=0.5, pos=0.5, above] {$2$}(w10);
					
					\node[circle, blue!20, draw, fill=blue!20, scale=0.5, inner sep=1pt] (v11) at (-2.5,-1.75) {$\black 1$}; \node[circle, blue!20,draw, fill=blue!20, scale=0.5, inner sep=1pt]  (u11) at   (-2,-1.75) {$\black 2$}; \node[circle,blue!20, draw, fill=blue!20,scale=0.5, inner sep=1pt]  (w11) at  (-1.5,-1.75) {$\black 3$}; \draw[green,-to] (u11) --node[scale=0.5, pos=0.5, above] {$-1$} (v11) ;
					\draw[green,-to] (w11) to [bend left=75] node[scale=0.5, pos=0.5, below] {$-1$} (v11);
					\draw[green,-to] (w11)--node[scale=0.5, pos=0.5, above] {$-1$}(u11);
					
					\node[circle, blue!20, draw, fill=blue!20, scale=0.5, inner sep=1pt] (v12) at (-1.5,-0.6) {$\black 1$}; \node[circle, blue!20,draw, fill=blue!20, scale=0.5, inner sep=1pt]  (u12) at   (-1,-0.6) {$\black 2$}; \node[circle,blue!20, draw, fill=blue!20,scale=0.5, inner sep=1pt]  (w12) at  (-0.5,-0.6) {$\black 3$}; \draw[blue,-to] (u12) --node[scale=0.5, pos=0.5, above] {$-1$} (v12) ;
					\draw[blue,-to] (v12) to [bend left=75] node[scale=0.5, pos=0.5, above] {$1$} (w12);\draw[blue,-to] (w12) to [bend left=75] node[scale=0.5, pos=0.5, below] {$-2$} (v12);
					\draw[blue,-to] (w12)--node[scale=0.5, pos=0.5, above] {$-1$}(u12);

					\node[circle, blue!20, draw, fill=blue!20, scale=0.5, inner sep=1pt] (v13) at (-0.5,-1.75) {$\black 1$}; \node[circle, blue!20,draw, fill=blue!20, scale=0.5, inner sep=1pt]  (u13) at   (0,-1.75) {$\black 2$}; \node[circle,blue!20, draw, fill=blue!20,scale=0.5, inner sep=1pt]  (w13) at  (0.5,-1.75) {$\black 3$}; \draw[blue,-to] (u13) --node[scale=0.5, pos=0.5, above] {$-1$} (v13) ;
					\draw[blue,-to] (v13) to [bend left=75] node[scale=0.5, pos=0.5, above] {$1$} (w13);\draw[blue,-to] (w13) to [bend left=75] node[scale=0.5, pos=0.5, below] {$-2$} (v13);
					\draw[blue,-to] (u13)--node[scale=0.5, pos=0.5, above] {$1$}(w13); \draw[blue,-to] (w13) to [bend left=75] node[scale=0.5, pos=0.5, above] {$-2$} (u13);
					
					\node[circle, blue!20, draw, fill=blue!20, scale=0.5, inner sep=1pt] (v14) at (0.5,-0.6) {$\black 1$}; \node[circle, blue!20,draw, fill=blue!20, scale=0.5, inner sep=1pt]  (u14) at   (1,-0.6) {$\black 2$}; \node[circle,blue!20, draw, fill=blue!20,scale=0.5, inner sep=1pt]  (w14) at  (1.5,-0.6) {$\black 3$}; \draw[blue,-to] (u14) --node[scale=0.5, pos=0.5, above] {$-1$} (v14) ;
					\draw[blue,-to] (v14) to [bend left=75] node[scale=0.5, pos=0.5, above] {$2$} (w14);
					\draw[blue,-to] (u14)--node[scale=0.5, pos=0.5, above] {$1$}(w14); \draw[blue,-to] (w14) to [bend left=75] node[scale=0.5, pos=0.5, below] {$-2$} (u14);
					
					\node[circle, blue!20, draw, fill=blue!20, scale=0.5, inner sep=1pt] (v15) at (1.5,-1.75) {$\black 1$}; \node[circle, blue!20,draw, fill=blue!20, scale=0.5, inner sep=1pt]  (u15) at   (2,-1.75) {$\black 2$}; \node[circle,blue!20, draw, fill=blue!20,scale=0.5, inner sep=1pt]  (w15) at  (2.5,-1.75) {$\black 3$}; \draw[green,-to] (u15) --node[scale=0.5, pos=0.5, above] {$-1$} (v15) ;
					\draw[green,-to] (v15) to [bend left=75] node[scale=0.5, pos=0.5, above] {$2$} (w15);
					\draw[green,-to] (u15)--node[scale=0.5, pos=0.5, above] {$2$}(w15);
					
					\node[circle, blue!20, draw, fill=blue!20, scale=0.5, inner sep=1pt] (v16) at (-1.5,-3.5) {$\black 1$}; \node[circle, blue!20,draw, fill=blue!20, scale=0.5, inner sep=1pt]  (u16) at   (-1,-3.5) {$\black 2$}; \node[circle,blue!20, draw, fill=blue!20,scale=0.5, inner sep=1pt]  (w16) at  (-0.5,-3.5) {$\black 3$}; \draw[red,-to] (u16) --node[scale=0.5, pos=0.5, above] {$-1$} (v16) ;
					\draw[red,-to] (w16) to [bend left=75] node[scale=0.5, pos=0.5, below] {$-1$} (v16);
					\draw[red,-to] (u16)--node[scale=0.5, pos=0.5, above] {$1$}(w16); \draw[red,-to] (w16) to [bend left=75] node[scale=0.5, pos=0.5, above] {$-2$} (u16);
					
					\node[circle, blue!20, draw, fill=blue!20, scale=0.5, inner sep=1pt] (v17) at (-0.5,-4.75) {$\black 1$}; \node[circle, blue!20,draw, fill=blue!20, scale=0.5, inner sep=1pt]  (u17) at   (0,-4.75) {$\black 2$}; \node[circle,blue!20, draw, fill=blue!20,scale=0.5, inner sep=1pt]  (w17) at  (0.5,-4.75) {$\black 3$}; \draw[green,-to] (u17) --node[scale=0.5, pos=0.5, above] {$-1$} (v17) ;
					\draw[green,-to] (w17) to [bend left=75] node[scale=0.5, pos=0.5, below] {$-1$} (v17);
					\draw[green,-to] (u17)--node[scale=0.5, pos=0.5, above] {$2$}(w17);

					\node[circle, blue!20, draw, fill=blue!20, scale=0.5, inner sep=1pt] (v18) at (0.5,-3.5) {$\black 1$}; \node[circle, blue!20,draw, fill=blue!20, scale=0.5, inner sep=1pt]  (u18) at   (1,-3.5) {$\black 2$}; \node[circle,blue!20, draw, fill=blue!20,scale=0.5, inner sep=1pt]  (w18) at  (1.5,-3.5) {$\black 3$}; \draw[red,-to] (u18) --node[scale=0.5, pos=0.5, above] {$-1$} (v18) ;
					\draw[red,-to] (v18) to [bend left=75] node[scale=0.5, pos=0.5, above] {$1$} (w18);\draw[red,-to] (w18) to [bend left=75] node[scale=0.5, pos=0.5, below] {$-2$} (v18);
					\draw[red,-to] (u18)--node[scale=0.5, pos=0.5, above] {$2$}(w18);
					
				\end{tikzpicture}
				\caption{Regions of $\mathcal{B}_3^{[1,2]}$ labeled by their associated $m$-acyclic weighted digraphs}\label{Fig1}
			\end{figure}
		}
	\end{example}
	
	We are now ready to prove \autoref{Main1}.
	\begin{proof}[Proof of \autoref{Main1}]
		Without loss of generality, let $A=\{a_1<a_2<\cdots<a_m\}$ in $\mathbb{R}$. When $l=0$, the case is trivial. For $l\ge 1$, applying \autoref{Bij-3}, there is a one-to-one correspondence between the regions of level $l$ of $\mathcal{B}_n^A$ and the valid $m$-acyclic weighted digraphs on the vertex set $[n]$ that have exactly $l$ strong components. From \autoref{Bij-2}, any such valid $m$-acyclic weighted digraph can be constructed in the following way: Let $\{B_1,B_2,\ldots,B_l\}$ be a set partition of the set $[n]$. Given the size $n_i$ of  every $B_i$, the number of ways to select an ordered set partition is equal to $\binom{n}{n_1,n_2,\ldots,n_l}$. Next, we need to choose a strongly connected valid $m$-acyclic weighted digraph for each part $B_i$. By \autoref{Bij-1}, there are $r_1(\mathcal{B}_{n_i}^A)$ ways to perform this step on the part $B_i$. Therefore, we have
		\begin{equation*}
			r_l\big(\mathcal{B}_n^A\big)=\sum_{\substack{n_1+\cdots+n_l=n,\\n_1,\ldots,n_l\ge 1}}\binom{n}{n_1,\ldots ,n_l}\prod_{i=1}^lr_1\big(\mathcal{B}_{n_i}^A\big).
		\end{equation*}
		Together with \eqref{Def1}, we further deduce
		\begin{align*}
			R_l(A;x)&=\sum_{n\ge l}r_l(\mathcal{B}_n^A)\frac{x^n}{n!}\\
			&=\sum_{n\ge l}\sum_{\substack{n_1+\cdots+n_l=n,\\n_1,\ldots,n_l\ge 1}}\binom{n}{n_1,\ldots ,n_l}\prod_{i=1}^lr_1\big(\mathcal{B}_{n_i}^A\big)\frac{x^n}{n!}\\
			&=\big(R_1(A;x)\big)^l.
		\end{align*}
		This is equivalent to the following relation
		\[
		R_l(A;x)=R_k(A;x)R_{l-k}(A;x).
		\]
		Comparing the coefficient in $\frac{x^n}{n!}$ of $R_l(A;x)$ and $R_k(A;x)R_{l-k}(A;x)$, we arrive at
		\[
		r_l(\mathcal{B}_n^A)=\sum_{i=0}^n\binom{n}{i}r_k\big(\mathcal{B}_i^A\big)r_{l-k}\big(\mathcal{B}_{n-i}^A\big).
		\]
		We complete the proof.
	\end{proof}
	It is important to note that the total number of regions in $\mathcal{B}_n^A$ is the sum of the numbers of regions of level $l$ for all $l=1,\ldots,n$, that is,
	\[
	r(\mathcal{B}_n^A)=\sum_{l=1}^nr_l(\mathcal{B}_n^A).
	\]
	Together with the second formula in \autoref{Main1}, we gain
	\[
	r(\mathcal{B}_n^A)=\sum_{l=1}^{n}\sum_{\substack{n_1+\cdots+n_l=n,\\n_1,\ldots,n_l\ge 1}}\binom{n}{n_1,\ldots ,n_l}\prod_{i=1}^lr_1\big(\mathcal{B}_{n_i}^A\big),
	\]
	which can be found in \cite[(2.7)]{Hetyei2024}. This implies
	\[
	R(A;x)=\sum_{l\ge 0}\big(R_1(A;x)\big)^l,
	\]
	where $R(A;x):=\sum_{n\ge 0}r(\mathcal{B}_n^A)\frac{x^n}{n!}$. It follows from the relation $R_l(A;x)=\big(R_1(A;x)\big)^l$ in \autoref{Main1} that we further derive
	\[
	R(A;x)=\sum_{l\ge 0}R_l(A;x).
	\]
	
	At the end of this section, we examine two specific types of arrangements that generalize the classical Catalan arrangement and semiorder arrangement. Specifically, the {\em semiorder-type arrangement} in $\mathbb{R}^n$ is defined as $\mathcal{C}_{n,A}^*:=\mathcal{B}_n^{A\cup -A}$, and the {\em Catalan-type arrangement} in $\mathbb{R}^n$ is given by $\mathcal{C}_{n,A}:=\mathcal{C}_{n,A}^*\cup\mathcal{B}_n$, where $A=\{a_1<a_2<\cdots<a_m\}$ is a positive real number set. As special cases of \autoref{Main1}, we recover the results of \cite[Theorems 1.2, 1.3, and 1.4]{CWYZ2024}.
	\begin{theorem}[\cite{CWYZ2024}, Theorems 1.2, 1.3, and 1.4]
		Let $\mathcal{A}_n$ be either $\mathcal{C}_{n,A}$ or $\mathcal{C}_{n,A}^*$. For any non-negative integers $n$, $l$ and $k$ with $k\le l$, we have
		\[
		R_l(A;x)=\big(R_1(A;x)\big)^l
		\]
		and
		\[
		r_l(\mathcal{A}_n)=\sum_{i=0}^n\binom{n}{i}r_k(\mathcal{A}_i)r_{l-k}(\mathcal{A}_{n-i}).
		\]
	\end{theorem}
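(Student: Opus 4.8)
The plan is to deduce this statement as an immediate specialization of \autoref{Main1}: the only real work is to recognize that both $\mathcal{C}_{n,A}$ and $\mathcal{C}_{n,A}^*$ are of the form $\mathcal{B}_n^{A'}$ for a finite set $A'$ of \emph{distinct} real numbers. By definition the semiorder-type arrangement is $\mathcal{C}_{n,A}^* = \mathcal{B}_n^{A\cup(-A)}$, so I set $A' := A\cup(-A)$; and since $\mathcal{B}_n$ is precisely the set of hyperplanes $x_i-x_j=0$ for $1\le i<j\le n$, we get $\mathcal{C}_{n,A} = \mathcal{C}_{n,A}^*\cup\mathcal{B}_n = \mathcal{B}_n^{(-A)\cup\{0\}\cup A}$, so in that case I set $A' := (-A)\cup\{0\}\cup A$. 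In either case $\mathcal{A}_n = \mathcal{B}_n^{A'}$, hence $r_l(\mathcal{A}_n) = r_l(\mathcal{B}_n^{A'})$ for all $n,l$, and consequently the generating function in the statement is $R_l(A;x) = \sum_{n\ge 0}r_l(\mathcal{A}_n)\frac{x^n}{n!} = R_l(A';x)$ in the notation of \eqref{Def1}.

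Next I would check the hypothesis of \autoref{Main1}, that $A'$ consists of distinct real numbers. Writing $A=\{a_1<a_2<\cdots<a_m\}$ with every $a_i>0$, the elements $-a_m<\cdots<-a_1$ are pairwise distinct and strictly negative, hence disjoint from $\{a_1,\ldots,a_m\}$; and $0\neq\pm a_i$ for every $i$ because $a_i\neq 0$. Thus $A'=A\cup(-A)$ has exactly $2m$ distinct elements, and $A'=(-A)\cup\{0\}\cup A$ has exactly $2m+1$ distinct elements, so in both cases $A'$ is an admissible input for \autoref{Main1}.

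Finally, applying \autoref{Main1} to $A'$ gives $R_l(A';x)=\big(R_1(A';x)\big)^l$, i.e. $R_l(A;x)=\big(R_1(A;x)\big)^l$, together with $R_l(A';x)=R_k(A';x)R_{l-k}(A';x)$ for every $k\le l$; comparing the coefficient of $\frac{x^n}{n!}$ on the two sides of the latter identity yields $r_l(\mathcal{A}_n)=\sum_{i=0}^n\binom{n}{i}r_k(\mathcal{A}_i)r_{l-k}(\mathcal{A}_{n-i})$, which is the asserted relation (and expanding $\big(R_1(A;x)\big)^l$ recovers the multinomial form as well).

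I do not expect a genuine obstacle here: the whole substance is the observation that these two families sit under the $\mathcal{B}_n^A$ umbrella of \autoref{Main1}, after which the conclusion is off the shelf. The one verification needed---distinctness of $A'$---is exactly where the standing hypothesis that $A$ consists of \emph{positive} reals is used; it would fail if $0\in A$, since then $0$ would occur as a repeated element of $A'$.
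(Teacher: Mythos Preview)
Your proposal is correct and matches the paper's own treatment: the paper presents this theorem immediately after \autoref{Main1} with the remark that it is recovered ``as special cases of \autoref{Main1},'' and gives no separate proof. Your only superfluous step is the distinctness check on $A'$---since $A'$ is defined as a union of sets it automatically has no repeated elements, so the hypothesis of \autoref{Main1} is met for free (and your closing remark about $0\in A$ causing a ``repeated element'' is accordingly a non-issue).
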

	\section{Proof of \autoref{Main2}}\label{Sec-3}
	In this section, we focus on the proof of \autoref{Main2}, which establishes a convolution formula connecting the characteristic polynomial $\chi_{\mathcal{B}_n^A}(t)$ of $\mathcal{B}_n^A$ to binomial coefficients, with the coefficients corresponding to the number of regions of each level $l$. The {\em characteristic polynomial} $\chi_{\mathcal{A}}(t)$ of a hyperplane arrangement $\mathcal{A}$ is defined as
	\[
	\chi_{\mathcal{A}}(t):=\sum_{\mathcal{B}\subseteq\mathcal{A},\,\bigcap_{H\in\mathcal{B}}H\ne\emptyset}(-1)^{|\mathcal{B}|}t^{\dim(\bigcap_{H\in\mathcal{B}}H)},
	\]
	see \cite{Stanley2007}. Stanley \cite{Stanley1996} in 1996  studied the {\em exponential sequence of arrangements}, which is a sequence $\mathfrak{A}=(\mathcal{A}_{1},\mathcal{A}_{2},\ldots)$ of hyperplane arrangements that satisfies the three conditions:
	\begin{enumerate}
		\item[{\rm (a)}] $\mathcal{A}_{n}$ is in $\mathbb{F}^{n}$ for some field $\mathbb{F}$ (independent of $n$).
		\item[{\rm (b)}] Every $H\in \mathcal{A}_{n}$ is parallel to some hyperplane $H'$ in the braid arrangement $\mathcal{B}_{n}$ (over $\mathbb{F}$).
		\item[{\rm (c)}] Let $S$ be a $k$-element subset of $[n]$, and define
		\[\mathcal{A}_{n}^{S}=\{H\in \mathcal{A}_{n}: H \; \text{is parallel to} \; x_{i}-x_{j}=0 \;\text{for some}\; i,j \in S\}.\]
		Then $L(\mathcal{A}_{n}^{S}) \cong L(\mathcal{A}_{k})$.
	\end{enumerate}
	
	Let $\mathcal{A}$ be a hyperplane arrangement in $\mathbb{R}^n$. By abusing the notation, we define
	\[
	r(\mathcal{A}):=(-1)^n\chi_{\mathcal{A}}(-1).
	\]
	Zaslavsky \cite{Zaslavsky1975} demonstrated that this agrees with the definition of $r(\mathcal{A})$ as the number of regions of $\mathcal{A}$. \cite[Theorem 1.2]{Stanley1996} provided a fundamental result related to the exponential sequence of arrangements using the exponential formula \cite{Stanley2024}, Whitney's formula \cite{Whitney1932} and  Zaslavsky's formula \cite{Zaslavsky1975}.
	\begin{theorem}[\cite{Stanley1996}, Theorem 1.2]\label{ESA}
		Let $\mathfrak{A}=(\mathcal{A}_{1},\mathcal{A}_{2},\ldots)$ be an exponential sequence of arrangements. Then
		\[
		\sum_{n\ge 0}\chi_{\mathcal{A}_n}(t)\frac{x^n}{n!}=\Big(\sum_{n\ge 0}(-1)^nr(\mathcal{A}_n)\frac{x^n}{n!}\Big)^{-t}
		\]
		with the convention $\chi_{\mathcal{A}_0}(t)=1$.
	\end{theorem}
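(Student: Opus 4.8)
The plan is to combine Whitney's formula \cite{Whitney1932}, the exponential formula \cite{Stanley2024}, and Zaslavsky's formula \cite{Zaslavsky1975}, exactly the three tools indicated just before the statement. I would start from the defining (Whitney-type) sum for $\chi_{\mathcal{A}_n}(t)$ over central subarrangements (i.e.\ those $\mathcal{B}$ with $\bigcap_{H\in\mathcal{B}}H\ne\emptyset$). By condition (b) every hyperplane of $\mathcal{A}_n$ is parallel to some $x_i-x_j=0$, so a subset $\mathcal{B}\subseteq\mathcal{A}_n$ determines a graph $G_{\mathcal{B}}$ on $[n]$ carrying an edge $\{i,j\}$ for each hyperplane of $\mathcal{B}$ parallel to $x_i-x_j=0$. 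Since the normals have the form $\bm e_i-\bm e_j$, the rank of any central $\mathcal{B}$ equals $n$ minus the number of connected components of $G_{\mathcal{B}}$, whence $\dim\big(\bigcap_{H\in\mathcal{B}}H\big)$ is exactly the number of connected components of $G_{\mathcal{B}}$.

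The next step is to group the central subsets by the set partition $\pi=\{B_1,\dots,B_k\}$ of $[n]$ cut out by the connected components of $G_{\mathcal{B}}$. Because coordinates indexed by distinct blocks are independent, centrality factorizes: $\bigcap_{H\in\mathcal{B}}H\ne\emptyset$ holds if and only if the restriction of $\mathcal{B}$ to each block is central, and moreover $(-1)^{|\mathcal{B}|}=\prod_{i}(-1)^{|\mathcal{B}\cap\mathcal{A}_n^{B_i}|}$. Writing $b_m$ for the signed count $\sum(-1)^{|\mathcal{B}'|}$ of connected central subsets spanning a fixed $m$-element block, condition (c) (the lattice isomorphism $L(\mathcal{A}_n^{S})\cong L(\mathcal{A}_{|S|})$) guarantees that $b_m$ depends only on $m$ and not on the chosen block. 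This yields the key identity
\[
\chi_{\mathcal{A}_n}(t)=\sum_{\pi\vdash[n]}t^{|\pi|}\prod_{B\in\pi}b_{|B|}.
\]

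With this in hand the exponential formula applies directly, with the indeterminate $t$ marking the number of blocks: setting $E(x):=\exp\big(\sum_{m\ge 1}b_m x^m/m!\big)$, I obtain
\[
\sum_{n\ge 0}\chi_{\mathcal{A}_n}(t)\frac{x^n}{n!}=\exp\Big(t\sum_{m\ge 1}b_m\frac{x^m}{m!}\Big)=E(x)^{t}.
\]
Finally, specializing $t=-1$ and invoking Zaslavsky's formula $r(\mathcal{A}_n)=(-1)^n\chi_{\mathcal{A}_n}(-1)$ identifies $E(x)^{-1}=\sum_{n\ge 0}(-1)^n r(\mathcal{A}_n)x^n/n!$; substituting $E(x)=\big(\sum_{n\ge 0}(-1)^n r(\mathcal{A}_n)x^n/n!\big)^{-1}$ back into $E(x)^{t}$ gives the stated formula, with the convention $\chi_{\mathcal{A}_0}(t)=1$ matching the constant term $1$ on both sides.

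I expect the main obstacle to be the well-definedness and lattice-invariance of the block weights $b_m$: one must argue that the signed count of connected, central, spanning subarrangements of a block depends only on the block's size. This is precisely where condition (c) is used, since $b_m$ can be expressed through Möbius values of the intersection lattice $L(\mathcal{A}_m)$ summed over its rank-$(m-1)$ flats, and the isomorphism preserves these values. A secondary point requiring care is verifying that a central subset on $[n]$ corresponds bijectively to an unordered disjoint union of connected central subsets over the blocks of its component partition, so that the compositional hypotheses of the exponential formula are genuinely met.
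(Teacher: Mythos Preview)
Your proposal is correct and follows exactly the approach the paper attributes to Stanley: the paper does not supply its own proof of this theorem but merely cites \cite[Theorem~1.2]{Stanley1996} and names the three ingredients (Whitney's formula, the exponential formula, and Zaslavsky's formula), all of which you deploy in the intended way. Your reconstruction---partitioning central subarrangements by the component partition of the associated graph, using condition~(c) to make the block weights $b_m$ depend only on $m$, applying the exponential formula with $t$ marking the number of blocks, and then specializing $t=-1$ via Zaslavsky---is the standard argument and matches what the paper indicates.
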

	
	Note that the sequence $\mathfrak{B}$ of hyperplane arrangements $\mathcal{B}_1^A,\mathcal{B}_2^A,\ldots$ is an exponential sequence. Below we give a proof of \autoref{Main2} by \autoref{ESA}.
	\begin{proof}[Proof of \autoref{Main2}]
		For the exponential sequence $\mathfrak{B}=(\mathcal{B}_1^A,\mathcal{B}_2^A,\ldots)$, we have
		\begin{equation}\label{EQ1}
			\sum_{n\ge 0}\chi_{\mathcal{B}_n^A}(1)\frac{x^n}{n!}=\Big(\sum_{n\ge 0}(-1)^nr(\mathcal{B}_n^A)\frac{x^n}{n!}\Big)^{-1}
		\end{equation}
		by \autoref{ESA}. The famous Zaslavsky's Formula \cite[Theorem C]{Zaslavsky1975} states that $\chi_{\mathcal{B}_n^A}(1)=(-1)^{n-1}r_1(\mathcal{B}_n^A)$ for any positive integer $n$. Substituting this into \eqref{EQ1} yields
		\[
		1+\sum_{n\ge 1}(-1)^{n-1}r_1(\mathcal{B}_n^A)\frac{x^n}{n!}=\Big(\sum_{n\ge 0}(-1)^nr(\mathcal{B}_n^A)\frac{x^n}{n!}\Big)^{-1}.
		\]
		Combining $R_l(A;x)=\sum_{n\ge 0}r_l(\mathcal{B}_n^A)\frac{x^n}{n!}$ in \eqref{Def1} and \autoref{ESA}, we deduce
		\begin{align*}
			\sum_{n \geq 0}\chi_{\mathcal{B}_n^A}(t)\frac{x^{n}}{n!}
			&=\Big(1+ \sum_{n \geq 1} (-1)^{n-1}r_1(\mathcal{B}_n^A)\frac{x^{n}}{n!}\Big)^t\\
			&=\big(1-R_1(A;-x)\big)^t\\
			&=\sum_{l\geq 0}(-1)^{l}\binom{t}{l}R_{1}(A;-x)^l.
		\end{align*}
		Together with \autoref{Main1}, we obtain
		\[
		\sum_{n \geq 0}\chi_{\mathcal{B}_n^A}(t)\frac{x^{n}}{n!}=\sum_{l \geq 0}(-1)^{l}\binom{t}{l}R_l(A;-x).
		\]
		Since $r_l(\mathcal{B}_n^A)=0$ whenever $l>n$, we further derive from \eqref{Def1} that
		\begin{align*}
			\sum_{n \geq 0}\chi_{\mathcal{B}_n^A}(t)\frac{x^{n}}{n!}&=\sum_{l \geq 0}\sum_{n\ge l}(-1)^{l}r_l(\mathcal{B}_n^A)\binom{t}{l}\frac{(-x)^n}{n!}\\
			&=\sum_{n \geq 0}\left(\sum_{0\le l\le n}(-1)^{n-l}r_l(\mathcal{B}_n^A)\binom{t}{l}\right)\frac{x^n}{n!}.
		\end{align*}
		Comparing the coefficients of $\frac{x^n}{n!}$ on the both sides of the above equation, we obtain
		\[
		\chi_{\mathcal{B}_n^A}(t)=\sum_{l=0}^n(-1)^{n-l}r_l(\mathcal{B}_n^A)\binom{t}{l}.
		\]
		This completes the proof.
	\end{proof}
	
	We revisit the hyperplane arrangement $\mathcal{B}_3^{[1,2]}$ from \autoref{Exa1} to illustrate \autoref{Main2}.
	\begin{example}
		{\rm Let $\mathcal{B}_3^{[1,2]}=\big\{H_{ij}^k:x_i-x_j=k\mid 1\le i<j\le3,k=1,2\big\}$ be a hyperplane arrangement in $\mathbb{R}^3$. The regions of $\mathcal{B}_3^{[1,2]}$ are labeled by their corresponding levels in \autoref{Fig2}. From \autoref{Fig2}, we observe that $r_l(\mathcal{B}_3^{[1,2]})=6$ for $l=1,2,3$ and $r_0(\mathcal{B}_3^{[1,2]})=0$. Therefore, the characteristic polynomial
			\[
			\chi_{\mathcal{B}_3^{[1,2]}}(t)=t^3-6t^2+11t=6\binom{t}{3}-6\binom{t}{2}+6\binom{t}{1}.
			\]
			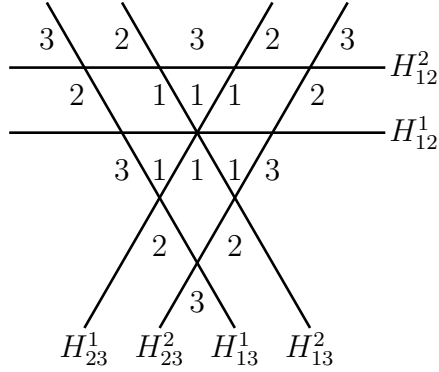
\begin{figure}[H]
				\centering
				\begin{tikzpicture}[scale=0.5,line width=1pt]
					\pgfmathsetmacro{\sqrtthree}{sqrt(3)}
					\pgfmathsetmacro{\twosqrtthree}{2*sqrt(3)}
					\pgfmathsetmacro{\threesqrtthree}{3*sqrt(3)}
					\draw (-5, 0) -- (5, 0) node[anchor=west,shift={(-0.1,0)}] {$H_{12}^1$};
					\draw(-5, \sqrtthree) -- (5,\sqrtthree) node[anchor= west,shift={(-0.1,0)}] {$H_{12}^2$};
					\draw (-4,\twosqrtthree) -- (1,-\threesqrtthree) node[anchor=south,shift={(0,-0.65)}] {$H_{13}^1$};
					\draw (-2, \twosqrtthree) -- (3,-\threesqrtthree) node[anchor=south,shift={(0,-0.65)}] {$H_{13}^2$};
					\draw (2, \twosqrtthree) -- (-3, -\threesqrtthree) node[anchor=south,shift={(0, -0.65)}] {$H_{23}^1$};
					\draw (4,\twosqrtthree) -- (-1, -\threesqrtthree) node[anchor= south,shift={(0,-0.65)}] {$H_{23}^2$};
					\draw (0,1) node{$1$};\draw (1,1) node{$1$};\draw (-1,1) node{$1$};
					\draw (0,-1) node{$1$};\draw (-1,-1) node{$1$};\draw (1,-1) node{$1$};
					\draw (3.2,1) node{$2$};\draw (-3.2,1) node{$2$};\draw (-2,2.5) node{$2$};
					\draw (2,2.5) node{$2$};\draw (1,-3) node{$2$};\draw (-1,-3) node{$2$};
					\draw (4,2.5) node{$3$};\draw (-4,2.5) node{$3$};\draw (2,-1) node{$3$};
					\draw (-2,-1) node{$3$};\draw (0,2.5) node{$3$};\draw (0,-4.5) node{$3$};
				\end{tikzpicture}
				\caption{Regions of $\mathcal{B}_3^{[1,2]}$ labeled by their corresponding levels}\label{Fig2}
			\end{figure}
		}
	\end{example}
	
	We end this section with an application of \autoref{Main2} to the Catalan-type and semiorder-type arrangements, originally presented by Chen et al. \cite{CWYZ2024}.
	\begin{theorem}[\cite{CWYZ2024}, Theorems 1.5]
		Let $\mathcal{A}_n$ be either $\mathcal{C}_{n,A}$ or $\mathcal{C}_{n,A}^*$. Then
		\[\chi_{\mathcal{A}_n}(t)=\sum_{l=0}^n(-1)^{n-l}r_l(\mathcal{A}_n)\binom{t}{l}.\]
	\end{theorem}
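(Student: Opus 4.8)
The plan is to recognize both arrangements $\mathcal{C}_{n,A}$ and $\mathcal{C}_{n,A}^{*}$ as special cases of the deformation $\mathcal{B}_n^{A'}$ studied above, and then to invoke \autoref{Main2} directly.

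First I would set up the two identifications. Since $A=\{a_1<a_2<\cdots<a_m\}$ is a set of \emph{positive} real numbers, the three sets $A$, $-A=\{-a_1,\dots,-a_m\}$ and $\{0\}$ are pairwise disjoint; hence $A\cup(-A)$ is a set of $2m$ distinct reals and $A\cup(-A)\cup\{0\}$ is a set of $2m+1$ distinct reals. By the very definition of the semiorder-type arrangement, $\mathcal{C}_{n,A}^{*}=\mathcal{B}_n^{A\cup(-A)}$. For the Catalan-type arrangement I would check straight from the definitions that
\[
\mathcal{C}_{n,A}=\mathcal{C}_{n,A}^{*}\cup\mathcal{B}_n=\mathcal{B}_n^{A\cup(-A)\cup\{0\}},
\]
since the middle term consists of all hyperplanes $x_i-x_j=c$ with $1\le i<j\le n$ and $c\in A\cup(-A)$, together with the braid hyperplanes $x_i-x_j=0$, which is exactly the arrangement on the right.

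Writing $A'=A\cup(-A)$ in the semiorder case and $A'=A\cup(-A)\cup\{0\}$ in the Catalan case, in both cases $A'$ is a finite set of distinct real numbers and $\mathcal{A}_n=\mathcal{B}_n^{A'}$ as arrangements; in particular $r_l(\mathcal{A}_n)=r_l(\mathcal{B}_n^{A'})$ for every $l$. Applying \autoref{Main2} to $\mathcal{B}_n^{A'}$ then yields
\[
\chi_{\mathcal{A}_n}(t)=\chi_{\mathcal{B}_n^{A'}}(t)=\sum_{l=0}^{n}(-1)^{n-l}r_l(\mathcal{B}_n^{A'})\binom{t}{l}=\sum_{l=0}^{n}(-1)^{n-l}r_l(\mathcal{A}_n)\binom{t}{l},
\]
with the convention $\chi_{\mathcal{A}_0}(t)=1$, which is the claimed identity.

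The argument has essentially no obstacle: the only point needing a moment's care is the bookkeeping check that the union defining $\mathcal{C}_{n,A}$ is literally the deformation $\mathcal{B}_n^{A'}$ with $A'$ obtained by adjoining $0$ to $A\cup(-A)$, and that $A'$ has no repeated entries — the latter being guaranteed precisely because $A$ consists of strictly positive numbers, so $\{0\}$, $A$ and $-A$ do not overlap. Once these identifications are recorded, \autoref{Main2} does all the work, so the ``hard part'' is really just confirming that the hypothesis of \autoref{Main2} (that the label set consists of distinct reals) is met.
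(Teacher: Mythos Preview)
Your proposal is correct and follows exactly the approach the paper intends: the paper presents this theorem as a direct application of \autoref{Main2}, and you have spelled out precisely the identification $\mathcal{C}_{n,A}^{*}=\mathcal{B}_n^{A\cup(-A)}$ and $\mathcal{C}_{n,A}=\mathcal{B}_n^{A\cup(-A)\cup\{0\}}$ (with the distinctness of entries guaranteed by positivity of $A$) that makes the application go through. In fact you have given more detail than the paper itself, which simply states the result as a consequence without writing out the verification.
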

	
	\section{Enumeration of $r_l(\mathcal{B}_n^{[-a,b]})$}\label{Sec-4}
	Let $a$ and $b$ be any non-negative integers. Recall from \autoref{Sec-1} that the hyperplane arrangement $\mathcal{B}_n^{[-a,b]}$ in $\mathbb{R}^n$ consists of the following hyperplanes:
	\[
	x_i-x_j=-a,-a+1\ldots,b-1,b,\quad 1\le i<j\le n.
	\]
	This section concerns the counting formula of $r_l(\mathcal{B}_n^{[-a,b]})$. Notice that $t$ is always a factor of the characteristic polynomial $\chi_{\mathcal{B}_n^{[-a,b]}}(t)$. Hence, we can express it as
	\begin{equation}\label{A=tB}
		\chi_{\mathcal{B}_n^{[-a,b]}}(t)=t\tilde{\chi}_{\mathcal{B}_n^{[-a,b]}}(t),
	\end{equation}
	where $\tilde{\chi}_{\mathcal{B}_n^{[-a,b]}}(t):=\frac{1}{t}\chi_{\mathcal{B}_n^{[-a,b]}}(t)$. Given a permutation $w=w_1w_2\ldots w_n\in\mathfrak{S}_n$. For $1\le i\le n-1$, if $w_i>w_{i+1}$, we call $i$ a {\em descent} of $w$. We denote by $d(w)$ the number of descents of $w$. Using the finite field  method, Athanasiadis demonstrated in \cite[(6.6)]{Athanasiadis1996-0} that the polynomial  $\tilde{\chi}_{\mathcal{B}_n^{[0,b]}}(t)$ can be written in the following form
	\begin{equation}\label{CP-0}
		\tilde{\chi}_{\mathcal{B}_n^{[0,b]}}(t)=\sum_{w\in\mathfrak{S}_{n-1}}\binom{t-bd(w)-b-1}{n-1}.
	\end{equation}
	Furthermore, Athanasiadis established a close relationship between characteristic polynomials $\chi_{\mathcal{B}_n^{[-a,b]}}(t)$ and $\chi_{\mathcal{B}_n^{[0,b-a]}}(t)$ in \cite[Theorem 7.1.1]{Athanasiadis1996-0}.
	\begin{theorem}[\cite{Athanasiadis1996-0}, Theorem 7.1.1]\label{CP-Relation}
		Let $a\le b$ be non-negative integers. Then
		\[
		\tilde{\chi}_{\mathcal{B}_n^{[-a,b]}}(t)=\tilde{\chi}_{\mathcal{B}_n^{[0,b-a]}}(t-an).
		\]
	\end{theorem}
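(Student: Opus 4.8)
The plan is to prove the identity by the finite field method of Athanasiadis \cite{Athanasiadis1996-0}, reducing both sides to one and the same explicit binomial sum. Write $N:=b-a\ge 0$. For a sufficiently large prime $q$, $\chi_{\mathcal{B}_n^{[-a,b]}}(q)$ counts the tuples $(x_1,\dots,x_n)\in\mathbb{Z}_q^{\,n}$ with $x_i-x_j\notin\{-a,-a+1,\dots,b\}$ for all $1\le i<j\le n$; fixing $x_n=0$ removes the simultaneous–translation freedom and computes $\tilde{\chi}_{\mathcal{B}_n^{[-a,b]}}(q)$. Since $0\in[-a,b]$, any valid tuple has pairwise distinct coordinates, so I would view $x_1,\dots,x_n$ as $n$ distinct points on the cycle $\mathbb{Z}_q$, with $x_n$ at $0$, and encode such a configuration by the clockwise circular order $C$ of the points — a cyclic arrangement of $[n]$ — together with the vector of clockwise gaps $(g_e)_{e\in E(C)}$ between cyclically consecutive points. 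These satisfy $g_e\ge 1$ and $\sum_{e\in E(C)} g_e=q$, and the correspondence is a bijection.

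The crux is a ``gap lemma'': the family of conditions $x_i-x_j\notin[-a,b]$ over all $i<j$ is equivalent to the local requirements $g_e\ge a+1$ whenever $e$ is a cyclic ascent of $C$ (the label of its tail is smaller than that of its head) and $g_e\ge b+1$ whenever $e$ is a cyclic descent. One direction follows by restricting the conditions to cyclically consecutive pairs. For the converse, fix $i<j$; the clockwise arc from $x_j$ to $x_i$ is a string of edges of $C$ along which the labels run from $j$ down to $i$, hence contains a descent edge and so has length $\ge b+1$, while the complementary arc runs from $i$ up to $j$, contains an ascent edge, and has length $\ge a+1$ — which is precisely the assertion $x_i-x_j\notin[-a,b]$ (a value in $\{1,\dots,b\}$ is excluded by the first bound, a value in $\{q-a,\dots,q-1\}$ by the second). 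The complementary upper bounds on each $g_e$ implied by the conditions are automatically satisfied once all these lower bounds hold.

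Granting the gap lemma, for a fixed cyclic arrangement $C$ with $d=d(C)$ cyclic descents (hence $n-d$ ascents), a stars–and–bars count after subtracting the minimal admissible gap from each $g_e$ gives $\binom{q-(n-d)(a+1)-d(b+1)+n-1}{n-1}=\binom{q-na-dN-1}{n-1}$ admissible gap vectors, so $\tilde{\chi}_{\mathcal{B}_n^{[-a,b]}}(q)=\sum_{C}\binom{q-na-dN-1}{n-1}$, the sum over the $(n-1)!$ cyclic arrangements of $[n]$. Both sides are polynomials in $q$ agreeing for infinitely many primes, so this is an identity in $t$. Specializing $a=0$, $b=N$ gives $\tilde{\chi}_{\mathcal{B}_n^{[0,N]}}(t)=\sum_{C}\binom{t-dN-1}{n-1}$, and comparing the two expressions yields $\tilde{\chi}_{\mathcal{B}_n^{[-a,b]}}(t)=\tilde{\chi}_{\mathcal{B}_n^{[0,\,b-a]}}(t-na)$, as claimed. (As a consistency check, cutting a cyclic arrangement of $[n]$ just before the maximal label $n$ identifies it with a $w\in\mathfrak{S}_{n-1}$ satisfying $d(C)=d(w)+1$, which recovers \eqref{CP-0} when $a=0$.)

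The main obstacle is the gap lemma — in particular verifying that all $\binom{n}{2}$-many forbidden–difference conditions collapse to the $n$ local gap inequalities, for which the observation ``a monotone run of labels must contain a descent'' is the decisive point — together with the routine but necessary bookkeeping of the finite field method: distinctness of coordinates, the normalization $x_n=0$, and identifying the range of $q$ for which the point count equals the polynomial $\sum_{C}\binom{q-na-dN-1}{n-1}$.
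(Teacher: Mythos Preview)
Your argument is correct. The paper does not supply its own proof of this theorem but simply quotes it from Athanasiadis's thesis; your proposal faithfully reconstructs that finite-field argument---the cyclic encoding of tuples in $\mathbb{Z}_q$, the reduction of all $\binom{n}{2}$ forbidden-difference conditions to the $n$ local gap inequalities via the ``any path from a larger to a smaller label contains a descent'' observation, and the stars-and-bars count giving $\sum_{C}\binom{t-na-d(C)N-1}{n-1}$---so there is nothing further to compare.
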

	The {\em Eulerian number} $A(n,k)$ is the number of permutations in $\mathfrak{S}_n$ with exactly $k-1$ descents. By applying \eqref{CP-0} to \autoref{CP-Relation}, we can derive an explicit formula for the characteristic polynomial $\chi_{\mathcal{B}_n^{[-a,b]}}(t)$.
	\begin{proposition}\label{Characteristic-Polynomial}
		Let $a\le b$ be non-negative integers. Then
		\[
		\chi_{\mathcal{B}_n^{[-a,b]}}(t)=t\sum\limits_{k=1}^{n-1}A(n-1,k)\binom{t-a(n-k)-bk-1}{n-1}.
		\]
		In particular, when $a=0$,
		\[
		\chi_{\mathcal{B}_n^{[0,b]}}(t) = t\sum\limits_{k=1}^{n-1} A(n-1,k) \binom{t-bk-1}{n-1}.
		\]
	\end{proposition}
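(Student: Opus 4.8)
The plan is to assemble the three ingredients already on the table: the finite-field expansion \eqref{CP-0} of $\tilde{\chi}_{\mathcal{B}_n^{[0,b]}}(t)$, the affine shift identity of \autoref{CP-Relation}, and the factorization \eqref{A=tB}. The only genuine work is bookkeeping: rewriting the sum over $\mathfrak{S}_{n-1}$ in \eqref{CP-0} as a sum indexed by descent numbers, and then simplifying a binomial argument after a linear substitution in $t$.

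First I would group the permutations in \eqref{CP-0} according to their number of descents. A permutation in $\mathfrak{S}_{n-1}$ has between $0$ and $n-2$ descents, and by definition the number of permutations in $\mathfrak{S}_{n-1}$ with exactly $k-1$ descents is the Eulerian number $A(n-1,k)$. Hence
\[
\tilde{\chi}_{\mathcal{B}_n^{[0,b]}}(t)=\sum_{k=1}^{n-1}A(n-1,k)\binom{t-b(k-1)-b-1}{n-1}=\sum_{k=1}^{n-1}A(n-1,k)\binom{t-bk-1}{n-1}.
\]
Multiplying through by $t$ and invoking \eqref{A=tB} in the case $a=0$ gives the special case $\chi_{\mathcal{B}_n^{[0,b]}}(t)=t\sum_{k=1}^{n-1}A(n-1,k)\binom{t-bk-1}{n-1}$ stated in the proposition.

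Next I would pass to general $a$. By \autoref{CP-Relation} we have $\tilde{\chi}_{\mathcal{B}_n^{[-a,b]}}(t)=\tilde{\chi}_{\mathcal{B}_n^{[0,b-a]}}(t-an)$, so substituting $b\mapsto b-a$ and $t\mapsto t-an$ into the formula just obtained yields
\[
\tilde{\chi}_{\mathcal{B}_n^{[-a,b]}}(t)=\sum_{k=1}^{n-1}A(n-1,k)\binom{(t-an)-(b-a)k-1}{n-1}.
\]
The one-line simplification $(t-an)-(b-a)k-1=t-a(n-k)-bk-1$ puts the binomial argument into the required form, and multiplying by $t$ via \eqref{A=tB} produces
\[
\chi_{\mathcal{B}_n^{[-a,b]}}(t)=t\sum_{k=1}^{n-1}A(n-1,k)\binom{t-a(n-k)-bk-1}{n-1},
\]
which is exactly the assertion of \autoref{Characteristic-Polynomial}.

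I do not expect a real obstacle: the proposition is a direct corollary of \eqref{CP-0}, \autoref{CP-Relation}, and \eqref{A=tB}. The two places that merit care are purely clerical — verifying that the descent-number index $k$ runs over $1,\dots,n-1$ (so that the extremes $d(w)=0$ and $d(w)=n-2$ are both captured, and the Eulerian numbers $A(n-1,k)$ are correctly indexed), and handling the sign arithmetic inside the binomial coefficient after the shift $t\mapsto t-an$, where the $+ak$ contribution must be grouped with $-an$ to produce $-a(n-k)$. Both become routine once written out explicitly.
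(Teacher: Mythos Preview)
Your proof is correct and follows exactly the approach the paper indicates: the paper merely states that the proposition is obtained ``by applying \eqref{CP-0} to \autoref{CP-Relation}'' without writing out the details, and you have supplied precisely those details (grouping by descent number, substituting, simplifying the binomial argument, and multiplying by $t$ via \eqref{A=tB}).
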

	
	For any positive integer $n$, let $(t)_n:=t(t-1)\cdots(t-n+1)$. Then $(t)_n$ can be written in the following forms
	\begin{equation}\label{Stirling-Equation1}
		(t)_n=\sum_{k=0}^{n}s(n,k)t^k=\sum_{k=0}^{n}(-1)^{n-k}c(n,k)t^k,
	\end{equation}
	where a {\em signless Stirling number of the first kind} $c(n,k)$ is the number of permutations in $\mathfrak{S}_n$ with exactly $k$ cycles, and the number
	\[
	s(n,k)=:(-1)^{n-k}c(n,k)
	\]
	is known as a {\em Stirling number of the first kind}. In addition, $t^n$ can be expressed as the form
	\begin{equation}\label{Stirling-Equation2}
		t^n=\sum_{k=0}^{n}S(n,k)(t)_k,
	\end{equation}
	where $S(n,k)$ is the number of partitions of an $n$-set into $k$-blocks  and called a {\em Stirling number of the second kind}.
	
	By comparing the coefficient of $(t)_l$ in different expressions of the characteristic polynomial $\chi_{\mathcal{B}_n^{[-a,b]}}(t)$, we provide a formula for counting regions of level $l$ in $\mathcal{B}_n^{[-a,b]}$.
	\begin{theorem}\label{Number-Level-Region}
		Let $a\le b$ be non-negative integers. For any non-negative integers $n$ and $l$, we have
		\[
		r_l(\mathcal{B}_n^{[-a,b]})=\sum\limits_{k=1}^{n-1}\sum_{j=0}^{n-1}\sum_{i=j}^{n-1}(-1)^{l-1-j}
		\frac{l!\binom{i}{j}A(n-1,k)S(j+1,l)c(n-1,i)\big(a(n-k)+bk+1\big)^{i-j}}{(n-1)!}.
		\]
	\end{theorem}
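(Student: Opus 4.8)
The plan is to expand the characteristic polynomial $\chi_{\mathcal{B}_n^{[-a,b]}}(t)$ in the basis of falling factorials $(t)_l=t(t-1)\cdots(t-l+1)=l!\binom{t}{l}$ in two different ways and to compare coefficients. On the one hand, \autoref{Main2} gives at once
\[
\chi_{\mathcal{B}_n^{[-a,b]}}(t)=\sum_{l=0}^n(-1)^{n-l}\frac{r_l(\mathcal{B}_n^{[-a,b]})}{l!}\,(t)_l,
\]
so that the coefficient of $(t)_l$ equals $(-1)^{n-l}r_l(\mathcal{B}_n^{[-a,b]})/l!$. On the other hand, \autoref{Characteristic-Polynomial} expresses the same polynomial as $t\sum_{k=1}^{n-1}A(n-1,k)\binom{t-a(n-k)-bk-1}{n-1}$, and the heart of the argument is to rewrite each summand $t\binom{t-a(n-k)-bk-1}{n-1}$ in the basis $\{(t)_l\}$.

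To do this, I would set $m_k:=a(n-k)+bk+1$ and note $\binom{t-a(n-k)-bk-1}{n-1}=\frac{1}{(n-1)!}(t-m_k)_{n-1}$. Applying \eqref{Stirling-Equation1} to $(t-m_k)_{n-1}$ and then the binomial theorem to each power $(t-m_k)^i$ gives
\[
(t-m_k)_{n-1}=\sum_{i=0}^{n-1}\sum_{j=0}^{i}s(n-1,i)\binom{i}{j}(-m_k)^{i-j}\,t^{j}.
\]
Multiplying by $t$ turns $t^j$ into $t^{j+1}$, and \eqref{Stirling-Equation2} expands $t^{j+1}=\sum_{l}S(j+1,l)(t)_l$. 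Substituting back and collecting the coefficient of $(t)_l$ yields
\[
(-1)^{n-l}\frac{r_l(\mathcal{B}_n^{[-a,b]})}{l!}=\frac{1}{(n-1)!}\sum_{k=1}^{n-1}\sum_{j=0}^{n-1}\sum_{i=j}^{n-1}A(n-1,k)\,s(n-1,i)\binom{i}{j}(-m_k)^{i-j}S(j+1,l).
\]

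It then remains to solve for $r_l(\mathcal{B}_n^{[-a,b]})$ and tidy the signs: using $s(n-1,i)=(-1)^{n-1-i}c(n-1,i)$ and $(-m_k)^{i-j}=(-1)^{i-j}m_k^{i-j}$, the sign prefactor becomes $(-1)^{n-l}(-1)^{n-1-i}(-1)^{i-j}=(-1)^{2n-1-l-j}=(-1)^{l-1-j}$ (the exponents $2n-1-l-j$ and $l-1-j$ differ by the even integer $2n-2l$), and one recovers exactly the asserted expression with $m_k=a(n-k)+bk+1$. The main point demanding attention is the bookkeeping of the index ranges when the orders of summation are interchanged, but this causes no trouble: $S(j+1,l)=0$ whenever $l>j+1$, so the terms lying outside the natural ranges vanish (in particular this forces $r_0(\mathcal{B}_n^{[-a,b]})=0$ for $n\ge1$, consistent with $t\mid\chi_{\mathcal{B}_n^{[-a,b]}}(t)$), and since the falling factorials form a basis of $\mathbb{R}[t]$ the coefficient comparison is legitimate. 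There is no deeper obstacle; the computation is elementary once the two Stirling identities \eqref{Stirling-Equation1} and \eqref{Stirling-Equation2} are in hand and the inputs \autoref{Main2} and \autoref{Characteristic-Polynomial} are invoked, with the degenerate cases $n\le 1$ checked by inspection.
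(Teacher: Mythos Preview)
Your proposal is correct and follows essentially the same route as the paper: both start from \autoref{Characteristic-Polynomial}, expand $(t-m_k)_{n-1}$ via \eqref{Stirling-Equation1} and the binomial theorem, convert $t^{j+1}$ to falling factorials via \eqref{Stirling-Equation2}, and then compare with the expansion of \autoref{Main2} in the basis $(t)_l$, finishing with the substitution $s(n-1,i)=(-1)^{n-1-i}c(n-1,i)$ and the sign simplification. The bookkeeping, the order of steps, and the handling of the degenerate cases match the paper's argument.
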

	\begin{proof}
		The case that $l=0$ or $l>n$ is trivial. We now focus on the case $1\le l\le n$. By \autoref{Characteristic-Polynomial}, we have
		\begin{align}\label{CP-1}
			&\chi(\mathcal{B}_n^{[-a,b]},t)=t\sum\limits_{k=1}^{n-1}\frac{A(n-1,k)}{(n-1)!}\big(t-a(n-k)-bk-1\big)_{n-1}\nonumber\\
			&\stackrel{\eqref{Stirling-Equation1}}=\sum\limits_{k=1}^{n-1}\frac{A(n-1,k)}{(n-1)!}\sum_{i=0}^{n-1}s(n-1,i)t\big(t-a(n-k)-bk-1\big)^i\nonumber\\
			&=\sum\limits_{k=1}^{n-1}\frac{A(n-1,k)}{(n-1)!}\sum_{i=0}^{n-1}s(n-1,i)\sum_{j=0}^i(-1)^{i-j}\binom{i}{j}t^{j+1}\big(a(n-k)+bk+1\big)^{i-j}\nonumber\\
			&=\sum\limits_{k=1}^{n-1}\frac{A(n-1,k)}{(n-1)!}\sum_{j=0}^{n-1}t^{j+1}\sum_{i=j}^{n-1}(-1)^{i-j}\binom{i}{j}s(n-1,i)\big(a(n-k)+bk+1\big)^{i-j}\nonumber\\
			&\stackrel{\eqref{Stirling-Equation2}}=\sum\limits_{k=1}^{n-1}\frac{A(n-1,k)}{(n-1)!}\sum_{j=0}^{n-1}\sum_{l=0}^{j+1}S(j+1,l)(t)_l
			\sum_{i=j}^{n-1}(-1)^{i-j}\binom{i}{j}s(n-1,i)\big(a(n-k)+bk+1\big)^{i-j}\nonumber\\
			&=\sum_{l=0}^n\left[\sum\limits_{k=1}^{n-1}\sum_{j=0}^{n-1}\sum_{i=j}^{n-1}(-1)^{i-j}\binom{i}{j}\frac{A(n-1,k)S(j+1,l)s(n-1,i)\big(a(n-k)+bk+1\big)^{i-j}}{(n-1)!}
			\right](t)_l.
		\end{align}
		From \autoref{Main2}, the characteristic polynomial $\chi_{\mathcal{B}_n^{[-a,b]}}(t)$ can be written in the following form
		\begin{equation}\label{CP-2}
			\chi_{\mathcal{B}_n^{[-a,b]}}(t)=\sum_{l=1}^n(-1)^{n-l}\frac{r_l(\mathcal{B}_n^{[-a,b]})}{l!}(t)_l.
		\end{equation}
		Comparing the coefficient of $(t)_l$ in \eqref{CP-1} and \eqref{CP-2}, we obtain from the relation $s(n-1,i)=(-1)^{n-1-i}c(n-1,i)$ that
		\[
		r_l(\mathcal{B}_n^{[-a,b]})=\sum\limits_{k=1}^{n-1}\sum_{j=0}^{n-1}\sum_{i=j}^{n-1}(-1)^{l-1-j}\frac{l!\binom{i}{j}A(n-1,k)S(j+1,l)c(n-1,i)\big(a(n-k)+bk+1\big)^{i-j}}{(n-1)!}.
		\]
		We complete the proof.
	\end{proof}
	
	We conclude the section with the formulas for counting regions of level $l$ in several typical hyperplane arrangements including (extended) Shi arrangement, (extended) Catalan arrangement and (extended) Linial arrangement.
	\begin{proposition}\label{Extended-Arrangements}
		Let $n$, $l$ and $b\ge 1$ be any non-negative integers. Then
		\begin{itemize}
			\item [{\rm(1)}] $r_l(\mathcal{B}_n^{[-b+2,b]})(t)=\frac{l!}{2^n}\sum_{j=0}^{n-1}\sum_{i=0}^{n}(-1)^{l-1-j}S(j+1,l)\binom{n}{i}\big((b-1)n+i\big)^{n-j-1}$.
			\item [{\rm(2)}]  $r_l(\mathcal{B}_n^{[-b+1,b]})=l\sum_{i=0}^{l-1}(-1)^i\binom{l-1}{i}(bn-i-1)^{n-1}$ {\rm (}see \cite[Theorem 1.3]{Ehrenborg2019}{\rm)}.
			\item [{\rm(3)}]  $r_l(\mathcal{B}_n^{[-b,b]})=\frac{n!bl}{(b+1)n-l}\binom{(b+1)n-l}{bn}$ {\rm (}see \cite[Theorem 6.1, 6.2]{CWYZ2024}\rm{)}.
		\end{itemize}
	\end{proposition}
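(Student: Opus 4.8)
The plan is to lean on \autoref{Main2}: rewriting $\binom{t}{l}=(t)_l/l!$ it says $\chi_{\mathcal{B}_n^{[-a,b]}}(t)=\sum_{l}(-1)^{n-l}\frac{r_l(\mathcal{B}_n^{[-a,b]})}{l!}(t)_l$, so $r_l(\mathcal{B}_n^{[-a,b]})$ is $(-1)^{n-l}l!$ times the coefficient of $(t)_l$ in the characteristic polynomial. Since the three families are $\mathcal{B}_n^{[-a,b]}$ with $a\in\{b-2,\,b-1,\,b\}$, for each of them I would (i) simplify $\chi_{\mathcal{B}_n^{[-a,b]}}(t)$, starting from the Eulerian expression of \autoref{Characteristic-Polynomial}, into a product or a short $\binom{n}{i}$-sum, and then (ii) extract the coefficient of $(t)_l$ by expanding in powers of $t$ and applying $t^{p+1}=\sum_l S(p+1,l)(t)_l$ from \eqref{Stirling-Equation2} (equivalently, by taking the $l$-th forward difference at $t=0$). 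Step (ii) is mechanical bookkeeping with Stirling numbers, binomial coefficients, the identity $j\binom{l}{j}=l\binom{l-1}{j-1}$, and, for (3), the Chu--Vandermonde convolution $\sum_i(-1)^i\binom{p}{i}\binom{M-i}{q}=\binom{M-p}{q-p}$; the real content sits in step (i).

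For the extended Catalan arrangement ($a=b$) the quantity $a(n-k)+bk=bn$ is independent of $k$, so \autoref{Characteristic-Polynomial} collapses via $\sum_{k}A(n-1,k)=(n-1)!$ to $\chi_{\mathcal{B}_n^{[-b,b]}}(t)=t\,(n-1)!\binom{t-bn-1}{n-1}=t\prod_{i=1}^{n-1}(t-bn-i)$; carrying out step (ii) on this product, the alternating sum one obtains evaluates by Chu--Vandermonde to $l(n-1)!\binom{(b+1)n-1-l}{n-l}$, which after rewriting the binomials is exactly $\frac{n!bl}{(b+1)n-l}\binom{(b+1)n-l}{bn}$, proving (3). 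For the extended Shi arrangement ($a=b-1$) one has $a(n-k)+bk+1=(b-1)n+k+1$, and reindexing the Eulerian sum and invoking Worpitzky's identity together with the Eulerian symmetry $A(n-1,k)=A(n-1,n-k)$ gives the classical closed form $\chi_{\mathcal{B}_n^{[-b+1,b]}}(t)=t(t-bn)^{n-1}$; step (ii) then yields (2), matching \cite[Theorem 1.3]{Ehrenborg2019}. For the extended Linial arrangement ($a=b-2$) one has $a(n-k)+bk+1=(b-2)n+2k+1$, and the key ingredient is the identity $\sum_{k}A(n-1,k)\binom{s+n-2k-1}{n-1}=\frac{1}{2^n}\sum_{i=0}^{n}\binom{n}{i}(s-i)^{n-1}$ applied with $s=t-(b-1)n$, which converts \autoref{Characteristic-Polynomial} into the Postnikov--Stanley form $\chi_{\mathcal{B}_n^{[-b+2,b]}}(t)=\frac{t}{2^n}\sum_{i=0}^{n}\binom{n}{i}\big(t-(b-1)n-i\big)^{n-1}$; applying step (ii) to each summand $t\big(t-((b-1)n+i)\big)^{n-1}$ and comparing with \autoref{Main2} gives (1). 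Alternatively, one can substitute $a=b-2$ into \autoref{Number-Level-Region} directly and use the same $2^{-n}$-identity to collapse the Eulerian weight, with the same outcome.

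I expect the main obstacle to be step (i) in each case, i.e.\ collapsing the Eulerian sum of \autoref{Characteristic-Polynomial} to the right closed form: this is trivial for Catalan, uses Worpitzky plus Eulerian symmetry for Shi, and needs the $2^{-n}\sum_i\binom{n}{i}$ identity for Linial, which is the least transparent of the three. Everything downstream is standard manipulation with Stirling and binomial coefficients, and the only arrangement-theoretic input required is the $\binom{t}{l}$-expansion of $\chi_{\mathcal{B}_n^A}(t)$ furnished by \autoref{Main2}.
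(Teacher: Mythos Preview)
Your proposal is correct and, where it overlaps with the paper, follows the same route: for part (1) the paper invokes the Postnikov--Stanley expression
\[
\chi_{\mathcal{B}_n^{[-b+2,b]}}(t)=\frac{t}{2^n}\sum_{i=0}^{n}\binom{n}{i}\big(t-(b-1)n-i\big)^{n-1}
\]
directly from \cite[(9.11)]{P-S2000} rather than deriving it from \autoref{Characteristic-Polynomial}, and then performs exactly your step (ii), expanding each power of $t$ via \eqref{Stirling-Equation2} and matching against \eqref{CP-2}. The paper does not prove (2) or (3) at all; it simply cites \cite{Ehrenborg2019} and \cite{CWYZ2024}. So your plan is strictly more ambitious: you supply the Worpitzky/symmetry reduction to $t(t-bn)^{n-1}$ for Shi and the Chu--Vandermonde evaluation for Catalan, both of which are sound, and you also outline how to obtain the $2^{-n}$ identity that the paper takes as a black box. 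Nothing in your argument is wrong; you are just doing more work than the paper chose to include.
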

	\begin{proof}
		The detailed proofs of  (2) and (3) in \autoref{Extended-Arrangements} can be found in  \cite[Theorem 1.3]{Ehrenborg2019} and \cite[Theorem 6.1, 6.2]{CWYZ2024}, respectively. So, here we only give a proof of (1) in \autoref{Extended-Arrangements}. From \cite[(9.11) in Example 9.10]{P-S2000},
		we have that the characteristic polynomial of the extended Linial arrangement $\mathcal{B}_n^{[-b+2,b]}$ is
		\[
		\chi_{\mathcal{B}_n^{[-b+2,b]}}(t)=\frac{t}{2^n}\sum_{i=0}^{n}\binom{n}{i}\big(t-(b-1)n-i\big)^{n-1}.
		\]
		Similar to \eqref{CP-1}, $\chi_{\mathcal{B}_n^{[-b+2,b]}}(t)$ can be written in the following form
		\begin{equation}\label{CP-3}
			\chi_{\mathcal{B}_n^{[-b+2,b]}}(t)=\sum_{l=0}^{n}\Big[\frac{1}{2^n}\sum_{j=0}^{n-1}\sum_{i=0}^{n}(-1)^{n-j-1}S(j+1,l)\binom{n}{i}
			\big((b-1)n+i\big)^{n-j-1}\Big](t)_l.
		\end{equation}
		Likewise, comparing the coefficient of  $(t)_l$ in \eqref{CP-2} (taking $a=b-2$) and \eqref{CP-3}, we obtain
		\[
		r_l(\mathcal{B}_n^{[-b+2,b]})=\frac{l!}{2^n}\sum_{j=0}^{n-1}\sum_{i=0}^{n}(-1)^{l-1-j}S(j+1,l)\binom{n}{i}\big((b-1)n+i\big)^{n-j-1}.
		\]
		This completes the proof.
	\end{proof}
	\section{Real roots of $\chi_{\mathcal{B}_n^{[-a,b]}}(t)$}\label{Sec-5}
	Postnikov and  Stanley in \cite[Theorem 9.12]{P-S2000} showed that all roots of the polynomial $\tilde{\chi}_{\mathcal{B}_n^{[-a,b]}}(t)$ of the hyperplane arrangement $\mathcal{B}_n^{[-a,b]}$ with $a\ne b$ have the same real part equal to $\binom{n(a+b+1)}{2}$. Motivated by their work, here we focus on exploring the real roots of  $\chi_{\mathcal{B}_n^{[-a,b]}}(t)$.
	\begin{theorem}\label{Main4}
		Let $n\ge 2$ and $b$ be integers with $b\ge n-1$. Then the real roots of the characteristic polynomial $\chi_{\mathcal{B}_n^{[0,b]}}(t)$ {\rm(}$\chi_{\mathcal{B}_n^{[-b,0]}}(t)$, resp.{\rm)} are determined by the following cases:
		\begin{itemize}
			\item [{\rm(1)}]  If $n$ is odd, then $\chi_{\mathcal{B}_n^{[0,b]}}(t)$ {\rm(}$\chi_{\mathcal{B}_n^{[-b,0]}}(t)$, resp.{\rm)} has a single real root at $0$  of multiplicity one.
			\item [{\rm(2)}]  If $n$ is even, then $\chi_{\mathcal{B}_n^{[0,b]}}(t)$ {\rm(}$\chi_{\mathcal{B}_n^{[-b,0]}}(t)$, resp.{\rm)} has exactly two real roots at $0$ and $\frac{n(b+1)}{2}$, each with  multiplicity one.
		\end{itemize}
	\end{theorem}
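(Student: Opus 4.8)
The plan is to pass to the reduced polynomial $\tilde{\chi}_{\mathcal{B}_n^{[0,b]}}(t)$ and combine the Postnikov--Stanley ``Riemann hypothesis'' with a functional equation and a sign analysis that makes essential use of $b\ge n-1$. First, the linear map $\bm{x}\mapsto-\bm{x}$ of $\mathbb{R}^n$ carries $\mathcal{B}_n^{[0,b]}$ onto $\mathcal{B}_n^{[-b,0]}$, so the two arrangements have the same characteristic polynomial and it suffices to treat $\mathcal{B}_n^{[0,b]}$. By \autoref{Characteristic-Polynomial}, $\chi_{\mathcal{B}_n^{[0,b]}}(t)=t\,\tilde{\chi}(t)$ where $\tilde{\chi}(t)=\sum_{k=1}^{n-1}A(n-1,k)\binom{t-bk-1}{n-1}$ is monic of degree $n-1$. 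Since $b\neq0$, Postnikov--Stanley \cite[Theorem 9.12]{P-S2000} gives that every complex root of $\tilde{\chi}$ has real part $c:=\tfrac{n(b+1)}{2}>0$; hence $\tilde{\chi}(0)\neq0$, so $0$ is a simple root of $\chi_{\mathcal{B}_n^{[0,b]}}$, and any real root of $\tilde{\chi}$ must equal $c$. Thus the theorem reduces to deciding whether $c$ is a root of $\tilde{\chi}$, and if so its multiplicity.

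Next I would prove the functional equation $\tilde{\chi}(2c-t)=(-1)^{n-1}\tilde{\chi}(t)$ from the Eulerian symmetry $A(n-1,k)=A(n-1,n-k)$ and the identity $\binom{n-2-N}{\,n-1\,}=(-1)^{n-1}\binom{N}{n-1}$ (the substitution $k\mapsto n-k$ turns the argument $2c-t-bk-1$ into $(n-2)-(t-bk-1)$); for $n$ even this forces $\tilde{\chi}(c)=0$ at once. To handle $n$ odd — and to see the even case concretely — I would evaluate directly: with $\alpha_p:=c-(bp+b+1)=\tfrac{(n-2)(b+1)}{2}-bp$ and $\phi(t):=\binom{t}{n-1}$ one has $\tilde{\chi}(c)=\sum_{p=0}^{n-2}A(n-1,p+1)\,\phi(\alpha_p)$. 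The crucial point, and the only place $b\ge n-1$ is used, is that every $\alpha_p$ lies in $(-\infty,0)\cup(n-2,\infty)$, with the sole exception of the central value $\alpha_{(n-2)/2}=\tfrac{n-2}{2}$, which occurs only for $n$ even: the $\alpha_p$ nearest to $[0,n-2]$ are already strictly outside it (they are $\tfrac{b+n-2}{2}>n-2$ and $\tfrac{n-2-b}{2}<0$ when $n$ is odd, and $b+\tfrac{n-2}{2}$, $\tfrac{n-2}{2}-b$ when $n$ is even), and the rest are even further out. Since $\phi(x)>0$ for $x>n-2$ and, when $n-1$ is even, also for $x<0$, it follows for $n$ odd that $\tilde{\chi}(c)$ is a sum of strictly positive terms, so $\tilde{\chi}(c)>0$; hence $\tilde{\chi}$ has no real root and $0$ is the unique real root of $\chi_{\mathcal{B}_n^{[0,b]}}$, with multiplicity one.

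For $n$ even it remains to show $c$ is a simple root of $\tilde{\chi}$. The functional equation $\tilde{\chi}(2c-t)=-\tilde{\chi}(t)$ forces the multiplicity of $c$ to be odd, so it suffices to check $\tilde{\chi}'(c)\neq0$. Differentiating termwise, $\tilde{\chi}'(c)=\sum_{p=0}^{n-2}A(n-1,p+1)\,\phi'(\alpha_p)$, and the same localization (now with $n-2$ even, so that $\phi'(x)>0$ for $x<0$ as well) shows every summand with $p\neq(n-2)/2$ is strictly positive, while the central one equals $A(n-1,n/2)\cdot\tfrac{(-1)^{(n-2)/2}\,[((n-2)/2)!]^2}{(n-1)!}$. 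If $n\equiv2\pmod4$ this is nonnegative and $\tilde{\chi}'(c)>0$ is immediate; if $n\equiv0\pmod4$ it is negative, and I would dominate it by keeping only the $p=0$ term, using $|A(n-1,n/2)\,\phi'(\tfrac{n-2}{2})|\le[((n-2)/2)!]^2\le(n-2)!$ against $\phi'(\alpha_0)\ge\tfrac{1}{(n-2)!}\big(\tfrac{(n-2)^2}{2}\big)^{n-2}$, the latter outweighing the former for every admissible $n$ by a routine Stirling-type estimate (and directly for the first value $n=4$, where $\tilde{\chi}'(c)=b^2-1>0$). Hence $\tilde{\chi}'(c)>0$, $c$ is a simple root of $\tilde{\chi}$ and thus of $\chi_{\mathcal{B}_n^{[0,b]}}$; combined with the earlier steps the real roots are exactly $0$ and $\tfrac{n(b+1)}{2}$, each of multiplicity one, and the same holds for $\chi_{\mathcal{B}_n^{[-b,0]}}$.

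I expect the main obstacle to be the sign bookkeeping in the last two paragraphs: one must pin down the positions of all $n-1$ numbers $\alpha_p$ relative to $[0,n-2]$ — this is exactly where $b\ge n-1$ is genuinely needed, and without it the statement is false (for instance $\chi_{\mathcal{B}_3^{[0,1]}}(t)=t(t-3)^2$ has a double root at $3$ when $b=1<n-1$) — and, in the even case, to control the lone ``wrong-sign'' central term of $\tilde{\chi}'(c)$ when $n\equiv0\pmod4$, which requires a quantitative estimate rather than a bare sign count.
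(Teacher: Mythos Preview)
Your proposal is correct and takes a genuinely different route from the paper. The paper never invokes the Postnikov--Stanley Riemann hypothesis; instead it argues directly, using the Eulerian symmetry $A(n-1,k)=A(n-1,n-k)$ together with the factor-by-factor inequality $|t-bk-i|\le|t-b(n-k)-(n-i)|$ for $t\le c$, that the pairing $A_k(t)+A_{n-k}(t)>0$ holds for all real $t$ when $n$ is odd (so $\tilde\chi(t)>0$ everywhere), and that $\tilde\chi'(t)>0$ for all real $t$ when $n$ is even (so $\tilde\chi$ is strictly increasing and its unique real zero at $c$ is simple). Your appeal to \cite[Theorem 9.12]{P-S2000} collapses the whole question to the single point $t=c$: the odd case then becomes the same one-line positivity (each $\phi(\alpha_p)>0$ because $\alpha_p\notin[0,n-2]$, which is exactly the paper's sign analysis specialized to $t=c$), and the even case reduces to verifying $\tilde\chi'(c)\neq 0$. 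What you gain is brevity and a more explicit treatment of the even case: the paper's ``similar argument'' for $\tilde\chi'(t)>0$ is not written out, and in fact the unpaired middle summand $A_{n/2}'(t)$ is negative at $t=c$ when $n\equiv 0\pmod 4$, so some domination of the kind you supply is needed there in any event. What the paper's approach buys is self-containment---no black box from \cite{P-S2000}---and the stronger global conclusion that $\tilde\chi$ is strictly monotone on $\mathbb{R}$ when $n$ is even.
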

	\begin{proof}
		Since $\chi_{\mathcal{B}_n^{[0,b]}}(t)=\chi_{\mathcal{B}_n^{[-b,0]}}(t)$, we only need to consider the real roots of the characteristic polynomial $\chi_{\mathcal{B}_n^{[0,b]}}(t)$.
		From \autoref{Characteristic-Polynomial}, we have
		\[
		\chi_{\mathcal{B}_n^{[0,b]}}(t) =\frac{t}{(n-1)!}\sum\limits_{k=1}^{n-1} A(n-1,k)(t-bk-1)_{n-1}.
		\]
		Let $A_k(t):=A(n-1,k)(t-bk-1)_{n-1}$ for $k=1,\ldots,n-1$. Then $\tilde{\chi}_{\mathcal{B}_n^{[0,b]}}(t)=\frac{1}{(n-1)!}\sum\limits_{k=1}^{n-1} A_k(t)$.
		Clearly, $0$ is a real root of the characteristic polynomial $\chi_{\mathcal{B}_n^{[0,b]}}(t)$ through the relation $\chi_{\mathcal{B}_n^{[0,b]}}(t)=t\tilde{\chi}_{\mathcal{B}_n^{[0,b]}}(t)$ in \eqref{A=tB}. We now examine the real roots of the polynomial $\tilde{\chi}_{\mathcal{B}_n^{[0,b]}}(t)$.
		
		When $n$ is odd, we derive from $A(n-1,k)=A(n-1,n-k)$ that
		\begin{equation*}\label{Symmetry1}
			\tilde{\chi}_{\mathcal{B}_n^{[0,b]}}(t)=\tilde{\chi}_{\mathcal{B}_n^{[0,b]}}\big(n(b+1)-t\big).
		\end{equation*}
		This implies that the polynomial $\tilde{\chi}_{\mathcal{B}_n^{[0,b]}}(t)$  is symmetric about the line $t=\frac{n(b+1)}{2}$. Moreover, the real numbers $bk+i$ and $b(n-k)+n-i$ are symmetric about $\frac{n(b+1)}{2}$ for $k,i=1,\ldots,\frac{n-1}{2}$ as well. This means that for any $t< \frac{n(b+1)}{2}$, we have
		\begin{equation*}
			|t-bk-i|<|t-b(n-k)-(n-i)|=b(n-k)+(n-i)-t,\quad k,i=1,\ldots,\frac{n-1}{2}.
		\end{equation*}
		By $A(n-1,k)=A(n-1,n-k)$, this further leads to that for any $t\le \frac{n(b+1)}{2}$,
		\[
		A_k(t)+A_{n-k}(t)>0, \quad k=1,\ldots,\frac{n-1}{2}.
		\]
		By symmetry, we deduce $A_k(t)+A_{n-k}(t)>0$ for all $t\ge \frac{n(b+1)}{2}$. Therefore, we have $\tilde{\chi}_{\mathcal{B}_n^{[0,b]}}(t)>0$ for all real numbers. Then, by \eqref{A=tB} again, we have shown that $\chi_{\mathcal{B}_n^{[0,b]}}(t)$ has a unique real root at $0$ in this case.
		
		When $n$ is even, taking $t=\frac{n(b+1)}{2}$ and applying $A(n-1,k)=A(n-1,n-k)$ again, we obtain
		\[
		A_k\Big(\frac{n(b+1)}{2}\Big)+A_{n-k}\Big(\frac{n(b+1)}{2}\Big)=0\For k=1,\ldots\frac{n}{2}-1\quad\And\quad A_{\frac{n}{2}}\Big(\frac{n(b+1)}{2}\Big)=0.
		\]
		So, $\tilde{\chi}_{\mathcal{B}_n^{[0,b]}}\big(\frac{n(b+1)}{2}\big)=0$. This means that $\tilde{\chi}_{\mathcal{B}_n^{[0,b]}}(t)$ has a real root at $\frac{n(b+1)}{2}$ in the case. It is clear that the derivative of $\tilde{\chi}_{\mathcal{B}_n^{[0,b]}}(t)$ is
		\[
		\tilde{\chi}'_{\mathcal{B}_n^{[0,b]}}(t)=\frac{1}{(n-1)!}\sum_{k=1}^{n-1}\sum_{i=1}^{n-1}\frac{A_k(t)}{t-bk-i}.
		\]
		By routine calculations, we can also obtain
		\[
		\tilde{\chi}'_{\mathcal{B}_n^{[0,b]}}(t)=\tilde{\chi}'_{\mathcal{B}_n^{[0,b]}}\big(n(b+1)-t\big).
		\]
		Namely, the polynomial $\tilde{\chi}'_{\mathcal{B}_n^{[0,b]}}(t)$ is symmetric about the line $t=\frac{n(b+1)}{2}$. Similar to the case that $n$ is odd, we can verify $\tilde{\chi}'_{\mathcal{B}_n^{[0,b]}}(t)>0$ for any $t\in\mathbb{R}$. Therefore, $\tilde{\chi}_{\mathcal{B}_n^{[0,b]}}(t)$ is a strictly increasing function on $\mathbb{R}$. Then we have $\tilde{\chi}_{\mathcal{B}_n^{[0,b]}}(t)<\tilde{\chi}_{\mathcal{B}_n^{[0,b]}}\big(\frac{n(b+1)}{2}\big)=0$ if $t<\frac{n(b+1)}{2}$, and $\tilde{\chi}_{\mathcal{B}_n^{[0,b]}}(t)>\tilde{\chi}_{\mathcal{B}_n^{[0,b]}}\big(\frac{n(b+1)}{2}\big)=0$ if $t>\frac{n(b+1)}{2}$. This implies that $\tilde{\chi}_{\mathcal{B}_n^{[0,b]}}(t)$ has a unique real root at $\frac{n(b+1)}{2}$ in the case. It follows from \eqref{A=tB} that $\chi_{\mathcal{B}_n^{[0,b]}}(t)$ has exactly two simple real roots: $0$ and $\frac{n(b+1)}{2}$ in the case. We finish the proof.
	\end{proof}
	
	Notably, Athanasiadis showed in \cite[Theorem 6.4.3]{Athanasiadis1996-0} or \cite[Theorem 4.3]{Athanasiadis1996} that the characteristic polynomials $\chi_{\mathcal{B}_n^{[0,b]}}(t)$ and $\chi_{\mathcal{B}_n^{[b]}}(t)$ are related by
	\begin{equation}\label{CP-Relation1}
		\tilde{\chi}_{\mathcal{B}_n^{[b]}}(t)=\tilde{\chi}_{\mathcal{B}_n^{[0,b+1]}}(t+n).
	\end{equation}
	As a direct consequence of \autoref{Main4} and \eqref{CP-Relation1}, we arrive at the following corollary.
	\begin{corollary}\label{Main-4-1}
		Let $n\ge 2$ and $b$ be positive integers with $b\ge n-2$. Then the real roots of the characteristic polynomial $\chi_{\mathcal{B}_n^{[b]}}(t)$ {\rm(}$\chi_{\mathcal{B}_n^{[-b]}}(t)$, resp.{\rm)} are determined by the following cases:
		\begin{itemize}
			\item [{\rm(1)}]  If $n$ is odd, then $\chi_{\mathcal{B}_n^{[b]}}(t)$ {\rm(}$\chi_{\mathcal{B}_n^{[-b]}}(t)$, resp.{\rm)} has a single real root at $0$ of multiplicity one.
			\item [{\rm(2)}]  If $n$ is even, then $\chi_{\mathcal{B}_n^{[b]}}(t)$ {\rm(}$\chi_{\mathcal{B}_n^{[-b]}}(t)$, resp.{\rm)} has exactly two real roots at $0$ and $\frac{nb}{2}$, each with  multiplicity one.
		\end{itemize}
	\end{corollary}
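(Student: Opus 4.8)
The plan is to deduce \autoref{Main-4-1} from \autoref{Main4} by means of the shift identity \eqref{CP-Relation1}. First I would record the reflection symmetry $\chi_{\mathcal{B}_n^{[-b]}}(t)=\chi_{\mathcal{B}_n^{[b]}}(t)$: the linear map $x_i\mapsto x_{n+1-i}$ reversing the coordinates carries $\mathcal{B}_n^{[-b]}=\mathcal{B}_n^{[-b,-1]}$ bijectively onto $\mathcal{B}_n^{[1,b]}=\mathcal{B}_n^{[b]}$ (a hyperplane $x_i-x_j=-k$ with $i<j$ is sent to $x_{n+1-j}-x_{n+1-i}=k$ with $n+1-j<n+1-i$), and a linear isomorphism between arrangements preserves the characteristic polynomial, analogously to the identity $\chi_{\mathcal{B}_n^{[0,b]}}(t)=\chi_{\mathcal{B}_n^{[-b,0]}}(t)$ invoked in the proof of \autoref{Main4}. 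So it suffices to analyze $\chi_{\mathcal{B}_n^{[b]}}(t)$.

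Since $b\ge n-2$, we have $b+1\ge n-1$, which is exactly the hypothesis needed to apply \autoref{Main4} with the parameter $b+1$ in place of $b$. Writing $\chi_{\mathcal{B}_n^{[b]}}(t)=t\,\tilde{\chi}_{\mathcal{B}_n^{[b]}}(t)$ as in \eqref{A=tB} (so that $0$ is automatically a root), the identity \eqref{CP-Relation1} gives $\tilde{\chi}_{\mathcal{B}_n^{[b]}}(t)=\tilde{\chi}_{\mathcal{B}_n^{[0,b+1]}}(t+n)$; hence the real roots of $\tilde{\chi}_{\mathcal{B}_n^{[b]}}$ are precisely the real roots of $\tilde{\chi}_{\mathcal{B}_n^{[0,b+1]}}$ shifted by $-n$, with multiplicities preserved, and it only remains to transport the conclusions of \autoref{Main4}.

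If $n$ is odd, the proof of \autoref{Main4} in fact establishes $\tilde{\chi}_{\mathcal{B}_n^{[0,b+1]}}(s)>0$ for every $s\in\mathbb{R}$; after the shift this yields $\tilde{\chi}_{\mathcal{B}_n^{[b]}}(t)>0$ for all $t\in\mathbb{R}$, so $\chi_{\mathcal{B}_n^{[b]}}(t)=t\,\tilde{\chi}_{\mathcal{B}_n^{[b]}}(t)$ has $0$ as its unique real root, simple because $\tilde{\chi}_{\mathcal{B}_n^{[b]}}(0)>0$. If $n$ is even, the proof of \autoref{Main4} shows $\tilde{\chi}_{\mathcal{B}_n^{[0,b+1]}}$ is strictly increasing on $\mathbb{R}$ with a single (hence simple) real root at $\frac{n(b+2)}{2}$; after the shift, $\tilde{\chi}_{\mathcal{B}_n^{[b]}}$ has the unique simple real root $\frac{n(b+2)}{2}-n=\frac{nb}{2}$. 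As $b\ge 1$, this root is distinct from $0$, and $0$ is still simple because $\tilde{\chi}_{\mathcal{B}_n^{[b]}}(0)=\tilde{\chi}_{\mathcal{B}_n^{[0,b+1]}}(n)\ne 0$ (the only zero of $\tilde{\chi}_{\mathcal{B}_n^{[0,b+1]}}$ is $\frac{n(b+2)}{2}$, which differs from $n$ since $b\ge 1$). Thus $\chi_{\mathcal{B}_n^{[b]}}(t)$ has exactly the two simple real roots $0$ and $\frac{nb}{2}$, and the same holds for $\chi_{\mathcal{B}_n^{[-b]}}(t)$ by the first step.

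Since the argument is essentially a linear change of variable, I do not anticipate a real obstacle. The only points needing care are the bookkeeping around the hypothesis (checking that $b\ge n-2$ is the correct translate of the condition $b\ge n-1$ in \autoref{Main4} under $b\mapsto b+1$) and making sure to import from \autoref{Main4} the \emph{strong} statements --- strict positivity when $n$ is odd and strict monotonicity when $n$ is even --- rather than merely the location of the roots, since it is those strong statements that guarantee the root lists are complete and the roots are simple.
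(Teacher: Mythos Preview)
Your proposal is correct and follows essentially the same route as the paper: reduce to $\chi_{\mathcal{B}_n^{[b]}}(t)$ via the symmetry $\chi_{\mathcal{B}_n^{[b]}}=\chi_{\mathcal{B}_n^{[-b]}}$, then apply the shift identity \eqref{CP-Relation1} together with \autoref{Main4} for the parameter $b+1$. Your write-up is in fact more careful than the paper's, explicitly checking that $b\ge n-2$ becomes $b+1\ge n-1$ and importing the strict positivity/monotonicity from the proof of \autoref{Main4} to ensure simplicity and completeness of the root list.
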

	\begin{proof}
		As $\chi_{\mathcal{B}_n^{[b]}}(t)=\chi_{\mathcal{B}_n^{[-b]}}(t)$, we only need to consider the real roots of the characteristic polynomial $\chi_{\mathcal{B}_n^{[b]}}(t)$. It follows from \autoref{Main4} and \eqref{CP-Relation1} that if $n$ is odd, $\chi_{\mathcal{B}_n^{[b]}}(t)$ has a unique real root at $0$ and if $n$ is even, $\chi_{\mathcal{B}_n^{[b]}}(t)$ has two simple real roots : $0$ and $\frac{nb}{2}$. We complete the proof.
	\end{proof}
	
	More generally, we can directly get the real roots of the characteristic polynomial $\chi_{\mathcal{B}_n^{[-a,b]}}(t)$ of $\mathcal{B}_n^{[-a,b]}$ via \autoref{CP-Relation} and \autoref{Main4}.
	\begin{corollary}\label{Main-4-2}
		Let $n\ge 2$, $a$ and $b$ be non-negative integers with $b-a\ge n-1$. Then the real roots of the characteristic polynomial $\chi_{\mathcal{B}_n^{[-a,b]}}(t)$ are determined by the following cases:
		\begin{itemize}
			\item [{\rm(1)}]  If $n$ is odd, then $\chi_{\mathcal{B}_n^{[-a,b]}}(t)$ has a single real root at $0$ of multiplicity one.
			\item [{\rm(2)}]  If $n$ is even, then $\chi_{\mathcal{B}_n^{[-a,b]}}(t)$ has exactly two real roots at $0$ and $\frac{n(a+b+1)}{2}$, each with  multiplicity one.
		\end{itemize}
	\end{corollary}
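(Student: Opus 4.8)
The plan is to derive this directly from \autoref{Main4} by means of the affine substitution supplied by \autoref{CP-Relation}, in exactly the same spirit as the deduction of \autoref{Main-4-1} from \autoref{Main4} and \eqref{CP-Relation1}. First I would use \eqref{A=tB} to write $\chi_{\mathcal{B}_n^{[-a,b]}}(t)=t\,\tilde{\chi}_{\mathcal{B}_n^{[-a,b]}}(t)$, so that $0$ is automatically a real root of $\chi_{\mathcal{B}_n^{[-a,b]}}(t)$; it then remains to locate the real zeros of $\tilde{\chi}_{\mathcal{B}_n^{[-a,b]}}(t)$ and to verify that $0$ is not one of them, which yields that the root $0$ has multiplicity exactly one.

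Next I would apply \autoref{CP-Relation} in the form $\tilde{\chi}_{\mathcal{B}_n^{[-a,b]}}(t)=\tilde{\chi}_{\mathcal{B}_n^{[0,b-a]}}(t-an)$. Since $a$ and $b$ are non-negative with $b-a\ge n-1$, the quantity $b-a$ is a non-negative integer satisfying the hypothesis of \autoref{Main4}, so \autoref{Main4} applies with its upper parameter taken to be $b-a$. Hence, if $n$ is odd, $\tilde{\chi}_{\mathcal{B}_n^{[0,b-a]}}(s)>0$ for every real $s$, so $\tilde{\chi}_{\mathcal{B}_n^{[-a,b]}}(t)>0$ for every real $t$ and $\chi_{\mathcal{B}_n^{[-a,b]}}(t)$ has the single simple real root $0$. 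If $n$ is even, $\tilde{\chi}_{\mathcal{B}_n^{[0,b-a]}}(s)$ is strictly increasing in $s$ with a unique, and simple, real zero at $s=\frac{n(b-a+1)}{2}$; translating by $an$, the polynomial $\tilde{\chi}_{\mathcal{B}_n^{[-a,b]}}(t)$ then has a unique simple real zero at
\[
t=an+\frac{n(b-a+1)}{2}=\frac{n(a+b+1)}{2}.
\]
Because $a+b+1\ge 1>0$, this value is distinct from $0$, so $\chi_{\mathcal{B}_n^{[-a,b]}}(t)$ has exactly the two simple real roots $0$ and $\frac{n(a+b+1)}{2}$, as asserted.

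I do not expect any genuine obstacle here; the substantive content is entirely contained in \autoref{Main4}, whose proof exploits the palindromic symmetry $A(n-1,k)=A(n-1,n-k)$ of the Eulerian numbers to show that $\tilde{\chi}_{\mathcal{B}_n^{[0,c]}}(t)$, and its derivative when $n$ is even, are symmetric about the line $t=\frac{n(c+1)}{2}$ and strictly positive away from it. The only points needing a moment's care are matching the inequality $b-a\ge n-1$ to the hypothesis of \autoref{Main4} for the parameter $c=b-a$ (already noted above), and checking that the shift $t\mapsto t-an$ does not move the second real root onto $0$, which is immediate from $a+b+1>0$. Thus the proof is a short reduction, parallel to that of \autoref{Main-4-1}.
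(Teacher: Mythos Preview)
Your proposal is correct and follows exactly the route the paper indicates: the paper gives no detailed proof of this corollary but simply remarks that it follows ``via \autoref{CP-Relation} and \autoref{Main4},'' which is precisely the affine substitution $\tilde{\chi}_{\mathcal{B}_n^{[-a,b]}}(t)=\tilde{\chi}_{\mathcal{B}_n^{[0,b-a]}}(t-an)$ that you carry out. Your additional checks that $b-a\ge n-1$ matches the hypothesis of \autoref{Main4} and that the shifted root $\frac{n(a+b+1)}{2}$ is nonzero are exactly the small verifications needed to make the reduction rigorous.
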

	\section*{Acknowledgements}
	The second author is supported by National Natural Science Foundation of China under Grant No. 12301424.
	
\end{document}